\newtheorem{theorem}{Theorem}[section]  
\newtheorem{lemma}[theorem]{Lemma}  
\newtheorem{proposition}[theorem]{Proposition}
\newtheorem{remark}[theorem]{Remark}
\newtheorem{example}[theorem]{Example}
\newcounter{theexercise} \setcounter{theexercise}{1}
\newcommand{\End}{\mathrm{End}}
\newcommand{\intd}[1]{\displaystyle \int_{#1}}
\newcommand{\intdd}[2]{\displaystyle \int_{#1}^{#2}}
\newcommand{\fracd}[2]{\displaystyle \frac{#1}{#2}}
\newcommand{\bb}[1]{\mathbb{#1}}
\newcommand{\defeq}{\mathrel{\mathpalette{\vcenter{\hbox{$:$}}}=}}
\newcommand{\SU}{\mathrm{SU}}
\newcommand{\SO}{\mathrm{SO}}
\newcommand{\U}{\mathrm{U}}
\newcommand{\GL}{\mathrm{GL}}
\newcommand{\PU}{\mathrm{PU}}
\newcommand{\A}{{\mathcal{A}}}
\newcommand{\G}{{\mathcal{G}}}
\newcommand{\PSU}{\mathrm{PSU}}
\newcommand{\afv}{v}
\newcommand{\afA}{A}
\newcommand{\afa}{a}
\newcommand{\afu}{u}
\title{On the components of the gauge group \\ for $\PU(r)$-bundles}
\author{David L. Duncan}
\date{}
\begin{document}

\maketitle

\begin{abstract}
We discuss a general procedure for using characteristic classes to study the components of the gauge group for a principal $G$-bundle. To illustrate this, we work out the case where $G$ is the projective unitary group. 
\end{abstract}

\tableofcontents

\section{Introduction}

Fix a Lie group $G$ and a space $X$. The objective of these notes is to describe a simple strategy for using the characteristic classes of $G$-bundles over $X$ to glean information about the components of the gauge group of any fixed $G$-bundle. Rather than stating any general theorems to this effect, we illustrate the techniques by working out the specific case where $G = \PU(r)$ is the projective unitary group, and $X$ is smooth manifold with low dimension. We note that these are by no means necessary restrictions, and the techniques we describe here can often be used in much more general settings (e.g., when $X$ is a CW complex of higher dimension, or $G$ is a some other Lie group). For concreteness, we work in the smooth category, so all spaces and maps are assumed to be smooth. Those interested in working in other categories (e.g., topolgical, CW) can, for the most part, simply reinterpret the words `space', `map', etc. 

As an application, we arrive at several results regarding the degree and parity of gauge transformations on $\PU(r)$-bundles. These are all standard for the case $r = 2$, but our proofs differ in flavor from many of those appearing elsewhere in the literature, e.g., \cite{Flinst} \cite{DS2}. For example, we provide an alternate proof of a result of Dostoglou and Salamon \cite{DS2} that a non-trivial bundle on a closed 3-manifold has a degree 1 gauge transformation (see Lemma \ref{degprop} for a precise statement).

		\section{Associated bundles and the gauge group}\label{AssocaiatedBundlesAndTheGaugeGroup}
		
		Let $G$ be a Lie group, $X$ a manifold and $P \rightarrow X$ a principal $G$-bundle (our convention is that the action of $G$ on $P$ is a \emph{right} action). We will say that two principal $G$-bundles $P \rightarrow X$ and $P' \rightarrow X$ are {\bfseries isomorphic (as $G$-bundles)} if there is a $G$-equivariant bundle map $\phi: P \rightarrow P'$ covering the identity. We will refer to such a $\phi$ as a {\bfseries $G$-bundle isomorphism}.

		Suppose we are given a right action $\rho: G \rightarrow \mathrm{Aut}(F)$ of $G$ on a manifold $F$. Then we can form the {\bfseries associated bundle}

$$P \times_G F \defeq (P \times F) / G$$
where $G$ acts diagonally on $P \times F$. The space $P \times_G F$ is naturally a fiber bundle over $X$ with fiber $F$. If $F$ has additional structure, and the action $\rho$ respects this structure, then the fibers of $P \times_G F \rightarrow X$ inherit this additional structure. 

\begin{example}\label{ex0}
Suppose $G$ acts on a vector space $V$ by linear transformations. Then 

$$P(V) \defeq P \times_G V$$ 
is naturally a \emph{vector} bundle over $X$. 
\end{example}

\begin{example}\label{ex1}
Suppose $H$ is a second Lie group and $G$ acts on $H$. Assume in addition that the action of $G$ on $H$ commutes with the action of $H$ on itself given by right multiplication. Then the space $P \times_G H$ is naturally equipped with the structure of a principal $H$-bundle over $X$. For example, take $G = \SU(r)$ and $H = \PU(r)$ (see the next section for a review of $\PU(r)$). Then $\SU(r)$ acts on $\PU(r)$ by left multiplication of the inverse via the homomorphism $\SU(r) \rightarrow \PU(r)$. This action commutes with right multiplication, so 

$$P \times_{\SU(r)} \PU(r) \longrightarrow X$$
is a principal $\PU(r)$-bundle.
\end{example}

\begin{example}\label{ex2}
Consider action of $G$ on itself by conjugation. This allows us to form the bundle

$$P \times_G G \longrightarrow X.$$
Note, however, that though the fibers of $P \times_G G$ are diffeomorphic to the group $G$, this is not a principal $G$-bundle in a canonical way. This is because the multiplication action of $G$ on itself does not, in general, commute with conjugation.
\end{example}

		A {\bfseries gauge transformation} on $P$ is a $G$-bundle isomorphism from $P$ to itself. The set of gauge transformations on $P$ forms a group, called the {\bf gauge group}, and is denoted ${\G}(P)$. On may equivalently view the gauge group as the set $\mathrm{Map}(P, G)^G$ of $G$-equivariant maps $P \rightarrow G$. Here $G$ acts on itself by the (right action of) conjugation. That is, the right action of $g \in G$ on $G$ is the map $G \rightarrow G$ given by $h \longmapsto g^{-1} h g$. Then the identification
	
				$$\G(P) = 	\mathrm{Map}(P, G)^G$$	
		follows by sending ${\afu}: P \rightarrow P$ to the map $g_{\afu}: P \rightarrow G$ defined by the formula
			
			$${\afu}(p) = p \cdot g_{\afu}(p).$$
		In general, we will typically not distinguish between ${\afu}$ and $g_{\afu}$. 
		
		There is a third equivalent formulation of the gauge group. In this formulation one views gauge transformations as sections of the bundle $P \times_G G \rightarrow X$ from Example \ref{ex2}. It is an easy exercise to show there is a canonical identification $\mathrm{Map}(P, G)^G = \Gamma(P \times_G G)$.
		
		Viewing the the gauge group as a space of maps, it is equipped with a natural topology.\footnote{Actually, there are many natural topologies one could put on $\G(P)$ (e.g., the topology of uniform convergence or the topology of uniform convergence in all (or some) derivatives). However, the set of connected components will be the same, and so we can unambiguously write $\pi_0 \: \G(P)$.} In these notes we are interested in the set of connected components $\pi_0 \: \G(P)$. This notation makes sense because the gauge group is locally path-connected, and so the connected components are exactly the path-connected components. We will use
		
		$$\G_0(P) \subseteq \G(P)$$ 
		to denote the component containing the identity.

		\section{The general strategy}\label{TheGeneralStrategy}
		
					Here we sketch the general strategy for using characteristic classes to study the components of the gauge group. The crux of the matter is the following observation by Donaldson \cite{Donfloer}. We include a proof here for convenience.
		
		\begin{proposition}{\cite[Section 2.5.2]{Donfloer}}\label{donfloer}
		Suppose $G$ is a compact Lie group, $X$ is a smooth manifold and $P \rightarrow X$ is a principal $G$-bundle. Then there is a bijection between $\pi_0(\G(P))$ and the set of isomorphism classes of principal $G$-bundles over $S^1 \times X$ that restrict to $P$ on a fiber. This bijection is induced from the map that sends a gauge transformation $\afu$ to the bundle
		
		\begin{equation}
		P_{\afu} \defeq \left[0, 1 \right] \times P / (0, \afu(p)) \sim (1, p),
		\label{bundleu}
		\end{equation}			
		given by the mapping torus of $\afu$. 
		\end{proposition}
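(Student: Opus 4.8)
The plan is to construct the bijection explicitly in both directions and verify they are mutually inverse, using the mapping torus construction \eqref{bundleu} as the forward map. First I would check that the construction $\afu \mapsto P_{\afu}$ is well-defined on isomorphism classes of bundles over $S^1 \times X$: the identification $(0, \afu(p)) \sim (1, p)$ in \eqref{bundleu} is $G$-equivariant because $\afu$ is a $G$-bundle map, so $P_{\afu}$ is genuinely a principal $G$-bundle over $S^1 \times X$ (viewing $S^1 = [0,1]/(0 \sim 1)$), and restricting to any fiber $\{t\} \times X$ recovers $P$ up to isomorphism. So the construction lands in the correct target set.

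Next I would show the map descends to $\pi_0(\G(P))$, i.e.\ that homotopic (path-connected) gauge transformations yield isomorphic bundles. If $\afu_s$ is a path in $\G(P)$ from $\afu_0$ to $\afu_1$, I would use the path to build an explicit $G$-bundle isomorphism $P_{\afu_0} \cong P_{\afu_1}$ covering the identity of $S^1 \times X$ — concretely, twisting the gluing by the path $\afu_s$ produces an isomorphism, and the key computation is writing down the fiberwise map and checking $G$-equivariance. I would also verify injectivity on $\pi_0$: if $P_{\afu_0} \cong P_{\afu_1}$ as bundles over $S^1 \times X$ restricting to $P$ on the fiber, then trivializing over the contractible interval $[0,1]$ and comparing the two clutching functions shows that $\afu_0$ and $\afu_1$ differ by a loop of gauge transformations, hence lie in the same component.

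For surjectivity I would take an arbitrary principal $G$-bundle $Q \to S^1 \times X$ restricting to $P$ on a fiber. Since $[0,1]$ is contractible, the pullback of $Q$ to $[0,1] \times X$ is isomorphic to the trivial extension $[0,1] \times P$; choosing such a trivialization and comparing the two ends over $\{0\}$ and $\{1\}$ yields a clutching gauge transformation $\afu \in \G(P)$ with $Q \cong P_{\afu}$. This is where compactness of $G$ (and the resulting good behavior of bundles) enters to guarantee the extension/trivialization exists. The main obstacle I expect is the bookkeeping in the homotopy-invariance and injectivity steps: one must carefully track how a choice of trivialization over $[0,1] \times X$ is only well-defined up to a gauge transformation, and show that this ambiguity is exactly a loop in $\G(P)$ — so that the clutching construction gives a genuinely well-defined inverse on $\pi_0$ rather than on $\G(P)$ itself. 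Everything else is a matter of writing down equivariant gluing maps and confirming the two constructions invert one another.
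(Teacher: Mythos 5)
Your route is correct in outline but genuinely different from the paper's. The paper never invokes the covering homotopy theorem: it equips a given bundle $Q \rightarrow S^1 \times X$ with a $G$-invariant metric (this, and only this, is where compactness of $G$ enters, via Haar averaging), takes the gradient flow of the projection $Q \rightarrow S^1$, and uses the time-one return map of this equivariant flow to produce the clutching gauge transformation (surjectivity); for injectivity it pushes the same flow through a hypothesized trivialization $\Psi\colon P_{\afu} \cong S^1 \times P$ to write down an explicit path in $\G(P)$ from $\afu$ to the identity. Your argument instead rests on the standard fact that a bundle over $\left[0,1\right] \times X$ restricting to $P$ at one end is isomorphic, rel that end, to $\left[0,1\right] \times P$. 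Both are legitimate: the paper's flow is essentially a differential-geometric implementation of your interval trivialization (the flow lines \emph{are} the trivialization), while your version quotes general bundle theory and, as a by-product, shows the hypothesis of compactness is not really needed.

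That said, there are two concrete problems. First, you attribute the existence of the trivialization over $\left[0,1\right] \times X$ to compactness of $G$. This is a misattribution: homotopy invariance of principal bundles holds for any Lie group over a manifold (or any topological group over a paracompact base) --- in the smooth setting, pick any connection on the bundle and parallel transport along the interval direction; parallel transport is automatically $G$-equivariant. Compactness in the paper is used only to average a metric into an invariant one. Second, your injectivity step has a genuine gap. An isomorphism $P_{\afu_0} \cong P_{\afu_1}$ covering the identity of $S^1 \times X$ lifts to a path $g_t$ in $\G(P)$ satisfying the intertwining relation $g_0 \circ \afu_0 = \afu_1 \circ g_1$; since $g_0$ and $g_1$ lie in the same component, this gives only $\left[\afu_1\right] = \left[g_0\right]\left[\afu_0\right]\left[g_0\right]^{-1}$ in $\pi_0\,\G(P)$, i.e.\ \emph{conjugacy}, not equality. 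Saying the clutching functions ``differ by a loop of gauge transformations'' does not repair this: to conclude $\left[\afu_0\right] = \left[\afu_1\right]$ you must arrange the isomorphism to restrict to the identity on the marked fiber (then $g_1 = \mathrm{id}$ and $\afu_1 \afu_0^{-1} = g_0 \in \G_0(P)$), which amounts to reading the proposition as classifying bundles \emph{together with} an identification of the distinguished fiber with $P$. Without that reading, the honest conclusion of your construction is a bijection with conjugacy classes in $\pi_0\,\G(P)$. In fairness, this subtlety lives in the statement itself, and the paper's own injectivity argument only treats the case $P_{\afu} \cong S^1 \times P$; but your write-up asserts full injectivity, so you should either adopt the based reading explicitly or observe that the ambiguity is exactly conjugation in $\pi_0\,\G(P)$ (which is harmless in the paper's later applications, where that group embeds in an abelian one).
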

		
		\begin{proof}
			Well-defined: The isomorphism class of the bundle $P_{\afu}$ depends only on the path component of ${\afu}$ in $\G(P)$. The gauge group is locally path-connected (it is locally modeled on the vector space consisting of sections of the bundle $P(\frak{g})$ defined by the Adjoint representation), so the connected components are the path components, and the map (\ref{bundleu}) descends to a give a well-defined map from $\pi_0(\G(P))$ to the isomorphism classes of principal $G$-bundles over $S^1 \times X$ that restrict to $P$ on a fiber. 
				
				Surjectivity: Suppose we are given any bundle $Q \rightarrow S^1 \times X$ with $Q\vert_{\left\{1\right\} \times X} = P$, and consider the obvious projection $\pi : Q \rightarrow S^1$. Since $G$ is compact, $Q$ admits a $G$-invariant metric. (This can be obtained by first choosing any metric $\left( \cdot, \cdot \right)$ and then declaring 
				
				$$(v, w)_{\mathrm{inv}} \defeq \frac{1}{\mathrm{vol}(G)} \intd{G} ( DR_g v, DR_g w)\: d\mathrm{vol}_G,$$
				 where $DR_g$ is the pushforward of multiplication by $g \in G$ and we are using an invariant Haar measure to define the integral on $G$.) Let $\Phi_t: Q \rightarrow Q$ denote the time-$t$ gradient flow of $\pi$, normalized so $\Phi_{1}$ maps each fiber to itself (this is just saying the circle has length 1). The $G$-invariance implies that $\Phi$ is $G$-equivariant. Then ${\afu} \defeq \Phi_{1}\vert_{\pi^{-1}(1)} : P \rightarrow P$ is the desired gauge transformation.
				
				Injectivity: Suppose there is some ${\afu}\in \G(P)$ with $\Psi: P_{\afu} \stackrel{\cong}{\rightarrow} S^1 \times P$. Let $\Phi_\bullet: I \times P_{\afu} \rightarrow P_{\afu}$ be the gradient flow as constructed in the previous paragraph, and $\pi: S^1 \times P \rightarrow P$ the projection. Then consider the composition
				
				$$\pi \circ \Psi \circ \Phi_\bullet \circ \Psi^{-1}\vert: I \times P  \longrightarrow P.$$
				where, in the domain, we have set $P = \left\{1 \right\} \times P \subset Q$. Since everything is equivariant, this is a path in $\G(P)$ from ${\afu}$ to the identity map.
		\end{proof}

		Proposition \ref{donfloer} serves as the vehicle for passing from topological information about principal bundles to topological information about gauge transformations. Topological information about principal $G$-bundles is captured by characterisic classes. For our purposes, we will say that a {\bfseries characteristic class for $G$} is a mapping $k$ that assigns to each principal $G$-bundle $P$ a cohomology class $k(P) \in H^*(X, R)$ for some ring $R$. We assume that $k$ is {\bfseries functorial} in the sense that for each map $f: Y \rightarrow X$, one has
		
		$$k(f^*P) = f^*k(P).$$
		It follows that $k(P)$ only depends on the $G$-bundle isomorphism class of $P$. The primary examples are the Chern classes for $\U(r)$- and $\SU(r)$-bundles, and the Pontryagin and Stiefel-Whitney classes for $G = \SO(r)$. In Section \ref{ClassificationOfPSU(r)Bundles} we will discuss certain characteristic classes for $G = \PU(r)$.

		Fix a gauge transformation ${\afu} \in \G(P)$, and let $P_{\afu}$ be the bundle from Proposition \ref{donfloer}. Suppose one has a preferred characteristic class $k$ for $G$. Then $k(P_{\afu})$ is unchanged under small perturbations of ${\afu}$, and so depends only on the connected component of ${\afu}$ in $\G(P)$. 
		
		Now suppose one has enough characteristic classes to classify all $G$-bundles over $S^1 \times X$. Then it follows immediately from Proposition \ref{donfloer} that two gauge transformations ${\afu}, {\afu'}\in \G(P)$ are in the same connected component if and only if these characteristic classes all agree on the bundles $P_{\afu}$ and $P_{\afu'}$. In the next section we carry this out explicitly for the case $G = \PU(r)$ and show how one can use this information to extract various existence results.

		\medskip
		
		Finally, we make a few remarks about the higher homotopy groups of the gauge group. We begin with $\pi_1\left(\G(P)\right)$. Given a loop $u : S^1 \rightarrow \G(P)$, one can form a bundle $R_{u} \rightarrow S^1 \times S^1 \times X$ by declaring $R$ to be the quotient
		
		$$\left[0, 1\right] \times S^1 \times P / (0, s, u(s)p) \sim (1, s, p).$$
		Then there is an analogue of Proposition \ref{donfloer} that says two loops $u, u'$ are homotopic if and only if the bundles $R_{u}$ and $R_{u'}$ are isomorphic. Then one can repeat the process above, and therefore use characteristic classes on $(\dim(X) + 2)$-dimensional manifolds to distinguish elements of $\pi_1(\G(P))$. Taking this generalization one step further, one can use characteristic classes to study $\pi_n(\G(P))$ for $n \in \bb{N}$. This can be a powerful strategy for small $n$, but the trade-off is that, for a given group $G$, it is not always the case that characteristic classes always detect non-isomorphic bundles over large-dimensional manifolds. For example, it is well-known that there are manifolds $Z$ and non-isomorphic $\U(r)$-bundles over $Z$ that have identical characteristic classes. On the other hand, we will see in the next section that if the dimension of $Z$ is no greater than $4$, then the characteristic classes for $\PU(r)$ distinguish $\PU(r)$-bundles over $Z$.

		\section{The case $G = \PU(r)$}	
			
		\subsection{Properties of $\PU(r)$}\label{PropertiesOfPSU(r)}
			
			Let $r \geq 2$ and consider the unitary group $\U(r)$. This group has center consisting of matrices of the form $\gamma \mathrm{Id}$ where $\gamma \in S^1 = U(1)$. We identify this center with $U(1)$, and the {\bfseries projective unitary group} is the quotient
			
			$$\PU(r) \defeq \U(r)/ \U(1).$$
			
			This group admits an equivalent description that is also quite useful. Consider the special unitary group $\SU(r) \subseteq \U(r)$. This has center $\bb{Z}_r$ consisting of matrices of the form $e^{2 \pi i d/r} \mathrm{Id}$ for $d \in \left\{0, \ldots, r-1\right\}$. Quotienting recovers the projective unitary group
			
			$$\PU(r) = \SU(r) / \bb{Z}_r.$$
			For this reason, $\PU(r)$ is often called the {\bfseries projective special unitary group} and is sometimes denoted $\PSU(r)$. 
			
			Set $G \defeq \PU(r)$ and let $\frak{g} \defeq  \frak{pu}(r)$ denote its Lie algebra. It follows from the previous paragraph that $G$ has trivial center, and is connected and compact. Furthermore, 
			
			$$\pi_1(G) \cong \bb{Z}_r,$$ 
			since $\SU(r)$ is simply-connected. Moreover, being the quotient of $\SU(r)$ by a discrete set, we have a Lie algebra isomorphism 
			
			$$\frak{g} \cong \frak{su}(r) = \left\{ \mu \in \mathrm{End}(\bb{C}^r) \: \vert \: \mu^* = - \mu \right\}.$$ 
			Hence $\frak{g}$ is simple and so it admits an $\mathrm{Ad}$-invariant inner product; moreover, this is unique up to multiplication by a positive scalar. Indeed, all such inner products are all of the form
			
			$$\langle \mu , \nu \rangle = \kappa_{r} \mathrm{tr}(\mu  \cdot \nu^*) = -\kappa_{r} \mathrm{tr}(\mu  \cdot \nu),$$
			for some $\kappa_{r} > 0$, where the trace is the one induced from the identification $\frak{g} \cong \frak{su}(r) \subset \mathrm{End}(\bb{C}^r)$, and $\mu \cdot \nu$ denotes matrix multiplication in $\mathrm{End}(\bb{C}^r)$. Here we describe several common normalizations for this inner product.
			
			\begin{itemize}
				\item Taking $\kappa_r = 1$ one obtains the {\bfseries Frobenius} inner product. This is the restriction to $\frak{g}$ of the standard inner product on the euclidean vector space $\mathrm{End}(\bb{C}^r) = \bb{R}^{2r^2}$.

				\item When $\kappa_{r} = 2r$ this is (the negative of) the {\bfseries Killing form}
				
				$$B: \frak{g} \otimes \frak{g} \longrightarrow \bb{R}$$
				In general, the Killing form can be abstractly defined by declaring that $B(\mu, \nu)$ is the trace of the operator 
				
				$$\mathrm{ad}(\mu) \circ \mathrm{ad}(\nu) = \left[\mu, \left[\nu, \cdot \right] \right].$$
				Here the trace is taken in the vector space
				
				$$\mathrm{End}(\frak{g}) \subset \mathrm{End} \left( \mathrm{End}(\bb{C}^r) \right) = \mathrm{End}\left(\bb{C}^{r^2} \right).$$
				 We will denote this by $\mathrm{Tr}$ to distinguish it from the trace $\mathrm{tr}$ on $\mathrm{End}(\bb{C}^r)$ above. Then a computation shows 
				
				\begin{equation}\label{traces}
				\mathrm{Tr}\left( \mathrm{ad}(\mu) \circ \mathrm{ad}(\nu) \right) = 2r\mathrm{tr}\left( \mu \cdot \nu^* \right).
				\end{equation}

				\item By taking $\kappa_{r} = 1/ 4\pi^2$ one obtains the inner product defined by declaring that the highest coroot of the $\frak{g}$ to have norm $\sqrt{2}$. That is, we consider the inner product such that if $\xi \in \frak{g} = \frak{su}(r)$ is any Lie algebra element mapping to $-\mathrm{Id} \in \SU(r)$ under the Lie-theoretic exponential map, then $\xi$ has norm $\sqrt{2}$. In terms of the standard basis coming from the embedding $\frak{g} \subset \End(\bb{C}^r)$, the element 

$$\xi = \left( \begin{array}{cccccc}
				0 & 2 \pi i  & 0 & \ldots & 0\\
				2 \pi i & 0  & 0& \ldots & 0\\
				0 & 0 & 0 & \ldots & 0\\
				\vdots & \vdots&\vdots && \vdots\\
				0 & 0 & 0 & \ldots & 0\\
				\end{array}\right)$$
				maps to $- \mathrm{Id} \in \SU(r)$, since it can be diagonalized to 
				
				$$\left( \begin{array}{cccccc}
				2\pi i & 0  & 0 & \ldots & 0\\
				0 & -2 \pi i  & 0& \ldots & 0\\
				0 & 0 & 0 & \ldots & 0\\
				\vdots & \vdots&\vdots && \vdots\\
				0 & 0 & 0 & \ldots & 0\\
				\end{array}\right)$$				
				Then $\xi$ has norm $\sqrt{2}$ when one uses the inner product $\langle \cdot, \cdot \rangle$ obtained by taking $\kappa_r = 1/4\pi^2$. 
						\end{itemize}

			 \noindent For our purposes, we leave the constant $\kappa_{r} > 0$ arbitrary, but fixed. In this way we hope that the reader will have an easier time matching up our formulas with those appearing elsewhere in the literature. 
			 
			 Having fixed an inner product on $\frak{g}$, we note that the adjoint map can be viewed as a representation of the form $\mathrm{Ad}: G \rightarrow \SO(\frak{g})$. Moreover, this representation is faithful. 
			
			Lastly, consider the action of $\U(r)$ on itself by conjugation. The center $\U(1)$ fixes every point in $\U(r)$, and so this action descends to an action of $G$ on $\U(r)$. We note that this $G$ action fixes the subgroup $\SU(r) \subset \U(r)$.

		\subsection{Classification of $\PU(r)$-bundles}\label{ClassificationOfPSU(r)Bundles}

		In \cite{Woody2}, L.M. Woodward exploited the adjoint representation to classify the principal $\PU(r)$-bundles over spaces of dimension up to $4$. This classification scheme assigns cohomology classes

		$$t_2(P) \in H^2(X, \bb{Z}_r), \indent q_4(P) \in H^4(X, \bb{Z})$$
		to each principal $\PU(r)$-bundle $P \rightarrow X$. For example, $q_4$ is defined to be the second Chern class of the complexified adjoint bundle $P(\frak{g})_{\bb{C}} \defeq P(\frak{g}) \otimes \bb{C}$,
		
				\begin{equation}
				q_4(P) \defeq  c_2\left( P(\frak{g})_{\bb{C}}\right) ,
				\label{defofchernq}
				\end{equation}
				where $\frak{g} = \frak{pu}(r)$ and $c_2$ is the second Chern class. The class $t_2$ is defined as the mod $r$ reduction of a suitable first Chern class.\footnote{The class $t_2$ is \emph{not} given by the first Chern class of $P(\frak{g})_{\bb{C}}$ (this is zero, see Example \ref{ex4}).} We will be mostly interested in the case where $X$ is a smooth manifold, but these classes are defined for CW complexes as well. 
				
\begin{example}
When $r = 2$ we have $\PSU(2) = \SO(3)$, and the classes $t_2$ and $q_4$ are exactly the second Stiefel-Whitney class and the (negative of the) first Pontryagin class, respectively. For example, the latter assertion follows because $c_2\left( P(\frak{g})_{\bb{C}}\right)  = - p_1\left( P(\frak{g})\right) $, and a rank 3 vector bundle with metric is isomorphic to its adjoint bundle (bundle of skew-symmetric endomorphisms) via the adjoint representation.
\end{example}

				\noindent Set $G  = \PU(r)$. We summarize the properties of these classes that we will need.

		\begin{itemize}
			\item If $\dim(X) \leq 4$ and $X$ is a manifold, then two bundles $P$ and $P'$ over $X$ are isomorphic if and only if $t_2(P) = t_2(P')$ and $q_4(P)= q_4(P')$;
			
		\medskip			
			
			\item The class $t_2(P)$ is zero if and only if the structure group of $P$ can be lifted to $\SU(r)$; that is, $P = \overline{P} \times_{\SU(r)} G$ for some principal $\SU(r)$-bundle $\overline{P} \rightarrow X$ \cite[p. 517]{Woody2}. Here the action of $\SU(r)$ on $G$ is by left multiplication via the projection $\SU(r)\rightarrow G$. 

		\medskip			
		
		\item The class $t_2(P)$ is the mod $r$ reduction of an integral class if and only if the structure group of $P$ can be lifted to $\U(r)$; that is, $P = \overline{P} \times_{\U(r)} G$ for some principal $\U(r)$-bundle $\overline{P} \rightarrow X$ \cite[p. 517]{Woody2}. Here the action of $\U(r)$ on $G$ is by left multiplication via the projection $\U(r)\rightarrow G$. 
		
		\medskip
			
			\item In the case of manifolds of dimension 2 or 3, the class $q_4$ is always zero, and $t_2$ determines a bijection between isomorphism classes of $G$-bundles and the space $H^2(X, \bb{Z}_r)$. In particular, if $X$ is a closed, connected, oriented surface or a connected, oriented elementary cobordism between two such surfaces, then $t_2$ is a bijection 
			
			$$t_2: \left\{ \begin{array}{c}
						\textrm{isomorphism classes of}\\
						\textrm{$G$-bundles over $X$}
						\end{array}\right\} \stackrel{\cong}{\longrightarrow}   \bb{Z}_r$$
						
						\medskip
						
			\item The mod $2r$ reduction of $q_4$ is a non-zero multiple of $C t_2$, where $C:H^2(X, \bb{Z}_r) \rightarrow H^4(X, \bb{Z}_{2r})$ is the Pontryagin square: 
			
			\begin{equation}
				q_4(P) = \left\{ \begin{array}{rc}
													(r+1)Ct_2(P)& \textrm{$r$ even}\\
													\fracd{r+1}{2} C t_2(P) & \textrm{$r$ odd}
													\end{array}\right.  \indent~~~~~~~~~ \mathrm{mod}\: 2r
			\label{pontrelation}
			\end{equation}
			In fact, when $X$ has dimension $\leq 4$, L.M. Woodward shows that the equivalence classes of $\PU(r)$-bundles correspond exactly to the pairs $(q, t) \in H^4(X, \bb{Z}) \times H^2(X, \bb{Z}_r)$ satisfying (\ref{pontrelation}). We also note that if $t \in H^2(X, \bb{Z}_{r})$ is the mod $r$ reduction of an integral class $\widetilde{t} \in H^2(X)$ then 
			
			\begin{equation}\label{computeC}
			Ct = \left\{ \begin{array}{rc}
										\widetilde{t} \smallsmile \widetilde{t} &\textrm{$r$ even}\\
												2\widetilde{t} \smallsmile \widetilde{t} 	 & \textrm{$r$ odd}
													\end{array}\right.  \indent~~~~~~~~~ \mathrm{mod}\: 2r.
							\end{equation}
			
			\medskip
			
			\item The classes $t_2$ and $q_4$ are functorial.
		\end{itemize}

			\subsection{Components of the gauge group}	\label{ComponentsOfTheGaugeGroup}

		Next, we use the characteristic classes $t_2$ and $q_4$ to study the components of the gauge group $\G(P)$ for a principal $\PU(r)$-bundle $P \rightarrow X$. Assume $X$ is a manifold of dimension at most $3$ (many of the definitions we give below have obvious extensions to manifolds of higher dimension). Set $G = \PU(r)$ and $\frak{g} = \frak{pu}(r)$. 
		
		Given ${\afu} \in \G(P)$, define $P_{\afu} \rightarrow S^1 \times X$ as in Section \ref{TheGeneralStrategy}, and consider the classes
		
		$$t_2(P_{\afu}) \in H^2(S^1 \times X, \bb{Z}_r), \indent q_4(P_{\afu}) \in H^4(S^1 \times X, \bb{Z}).$$		
		By the K\"{u}nneth formula, we have an isomorphism
		
		$$H^k(S^1 \times X, R) \cong H^k(X, R) \oplus H^{k-1}(X, R),$$
		where $R = \bb{Z}$ or $\bb{Z}_r$. The image of $t_2(P_{\afu}) \in H^2(X, \bb{Z}_r) \oplus H^1(X, \bb{Z}_r)$ in the first factor is exactly $t_2(P)$, so the dependence of $t_2(P_{\afu})$ on the gauge transformation ${\afu}$ is contained entirely in the projection of $t_2(P_{\afu})$ to the second factor $H^1(X, \bb{Z}_r)$. We denote this projection by 
		
		$$\eta({\afu}) \in H^1(X, \bb{Z}_r),$$
		and call this the {\bfseries parity} of ${\afu}$. We therefore have $t_2(P_{\afu}) = (t_2(P), \eta(u))$, with respect to the K\"{u}nneth splitting. 
		
		In a similar fashion, we will define the \emph{degree} of a gauge transformation. Before defining this explicitly, we first note that the K\"{u}nneth formula gives $H^4(S^1 \times X, \bb{Z}) = H^4(X, \bb{Z}) \oplus H^3(X, \bb{Z})$. Second, we have the following observation:
		
		\medskip
		
		\noindent \emph{Claim 1: The image of $q_4(P_{\afu})$ in $H^3(X, \bb{Z})$ is always even.} 

		\medskip		
		
		Assuming the claim for now, we define the {\bfseries degree} of ${\afu}$ to be the class $\deg({\afu}) \in H^3(X, \bb{Z})$ for which $2\deg({\afu}) $ is the image of $q_4(P_{\afu})$ in $H^3(X, \bb{Z})$. (This definition is valid regardless of the dimension of $X$.) 
		
		\begin{remark}
			Our orientation convention on $S^1 \times X$ is that $ds \wedge d\mathrm{vol}_X$ is a positive form when $d\mathrm{vol}_X$ is a positive form on $X$. Note that the opposite convention would yield a definition of the degree that is the negative of the one defined above. For example, the degree operator appearing in \cite{DS2} is the negative of the one here.
		\end{remark}

The proof of Claim 1 is just a characteristic class computation: We have $q_4(P_{\afu}) = c_2(P_{\afu}(\frak{g}) \otimes \bb{C})$. The mod 2 reduction of the total Chern class of a complex vector bundle is the total Stiefel-Whitney class of its underlying real vector bundle (see \cite[p. 171]{MilS}). The underlying real vector bundle of $P_{\afu}(\frak{g}) \otimes \bb{C}$ is $P_{\afu}(\frak{g})  \oplus P_{\afu}(\frak{g})$. So working mod 2, we have

$$q_4(P_{\afu})  \equiv_2  w_4(P_{\afu}(\frak{g})  \oplus P_{\afu}(\frak{g}) )  \equiv_2  w_2(P_{\afu}(\frak{g}))^2$$
By the K\"{u}nneth formula again, we can write $w_2(P_{\afu}(\frak{g})) = a + b \smallsmile ds$, where $a \in H^2(X, \bb{Z}_2)$, $b \in H^1(X, \bb{Z}_2)$ and $ds$ is the generator of $H^1(S^1,\bb{Z}_2)$. So we have

$$q_4(P_{\afu}) \equiv_2 a^2 + 2 a \smallsmile b \smallsmile ds \equiv_2 a^2.$$
However, the term $a^2$ is a 4-form on $X$ and this vanishes when we project to $H^3(X, \bb{Z})$. This proves Claim 1.

\medskip

			Proposition \ref{donfloer}, and the properties of $t_2$ and $q_4$ immediately imply that the parity and degree detect the components of the gauge group in low dimensions:

\medskip		
		
		\begin{itemize}
		\item If $\dim X \leq  2$, or if $\dim X = 3$ but $X$ is \emph{not} closed, then there is an injection 
		
			$$\begin{array}{rcl}
			\eta: \pi_0(\G(P)) &\hookrightarrow & H^1(X, \bb{Z}_r).\\
			\end{array}$$
			In particular, a gauge transformation $\afu$ lies in the identity component $\G_0(P)$ if and only if $\eta(\afu) = 0$.

		\medskip
		
		\item If $\dim X = 3$ and $X$ is closed, connected and oriented, then there is an injection 
		
		$$(\eta, \mathrm{deg}): \pi_0(\G(P))	\hookrightarrow H^1(X, \bb{Z}_r) \times \bb{Z}.$$
		In particular, a gauge transformation $\afu$ lies in the identity component $\G_0(P)$ if and only if $\eta(\afu) = 0$ and $\deg(\afu) = 0$. 
		
		\end{itemize}
		
		\medskip
		
		\noindent  See \cite{DS2}, \cite{Flinst} for alternative realizations of the parity and degree.

		The next proposition describes some of the data captured by the parity.

		\begin{proposition}\label{trivgaugetrans}
				Fix a gauge transformation ${\afu}: P \rightarrow \PU(r)$. Then the following are equivalent:

				\begin{itemize}
					\item $\eta({\afu}) = 0$;
					\item ${\afu}: P \rightarrow \PU(r)$ lifts to a $\PU(r)$-equivariant map $\widetilde{{\afu}}: P \rightarrow \SU(r)$;
					\item When restricted to the 1-skeleton of $X$, ${\afu}$ is homotopic to the identity map.
				\end{itemize}
				
				Moreover, if $X$ is a compact, connected, oriented 3-manifold and $\eta({\afu}) = 0$, then $\deg({\afu})$ is divisible by $r$.
		\end{proposition}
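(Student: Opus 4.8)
The plan is to reduce the three equivalent conditions to a single statement about the behavior of $\afu$ along loops. For a loop $\gamma: S^1 \to X$, the restricted bundle $P|_\gamma \to S^1$ is trivial (the structure group is connected), so after trivializing, $\afu$ restricts to a loop $\afu \circ \gamma$ in $\PU(r)$; its class $[\afu\circ\gamma] \in \pi_1(\PU(r)) \cong \bb{Z}_r$ is well defined, independent of the trivialization and basepoint, since inner automorphisms act trivially on $\pi_1$. The crux will be the following lemma: \emph{$\langle \eta(\afu), \gamma\rangle = 0$ in $\bb{Z}_r$ if and only if $[\afu\circ\gamma] = 0$, for every loop $\gamma$.} To prove it I will restrict the mapping torus $P_\afu$ along $S^1\times\gamma : S^1\times S^1 \to S^1\times X$. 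Functoriality of $t_2$ and the K\"unneth identity $t_2(P_\afu) = (t_2(P), \eta(\afu))$ identify $\langle\eta(\afu),\gamma\rangle$ with the evaluation of $t_2\big((S^1\times\gamma)^*P_\afu\big)$ on the fundamental class of the torus; and $(S^1\times\gamma)^*P_\afu$ is precisely the $\PU(r)$-bundle over $S^1\times S^1$ obtained by clutching with $\afu\circ\gamma$. Since $t_2$ is a complete invariant for $\PU(r)$-bundles over a closed oriented surface, this bundle is trivial---equivalently $\afu\circ\gamma$ is null-homotopic---exactly when its $t_2$ vanishes, which proves the lemma.

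With the lemma in hand, each of the three conditions reduces to the single requirement that $[\afu\circ\gamma] = 0$ for all loops $\gamma$. The vanishing $\eta(\afu) = 0$ is equivalent to this because $H^1(X,\bb{Z}_r) = \Hom(H_1(X,\bb{Z}),\bb{Z}_r)$, so $\eta(\afu)$ vanishes iff it pairs trivially with every $1$-cycle. For the homotopy condition, I will use that $P|_{X^{(1)}}$ is trivial and that, since $\PU(r)$ is connected, $\afu$ and the identity already agree up to homotopy on the $0$-skeleton; the obstruction to homotoping them over the $1$-skeleton is the primary difference class in $H^1(X^{(1)},\bb{Z}_r)$ (untwisted, as $\PU(r)$ acts on $\pi_1(\PU(r))$ by inner automorphisms), there are no higher obstructions since $X^{(1)}$ is $1$-dimensional, and this class evaluates on each $\gamma$ to $[\afu\circ\gamma]$. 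For the lifting condition, an equivariant lift $\widetilde{\afu}: P\to\SU(r)$ is a section of the fiberwise $\bb{Z}_r$-covering $P\times_G\SU(r)\to P\times_G\PU(r)$ restricted along the section determined by $\afu$; such a section exists iff the resulting $\bb{Z}_r$-covering of $X$ is trivial, and its monodromy around $\gamma$ is the obstruction to lifting $\afu\circ\gamma$ through $\SU(r)\to\PU(r)$, namely $[\afu\circ\gamma]$ (using $\pi_1(\SU(r)) = 0$).

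The main obstacle is the lemma, and inside it the identification of $(S^1\times\gamma)^*P_\afu$ as a clutching bundle together with the appeal to the surface classification of $t_2$; once the pairing of $\eta$ against loops is computed, the three reductions are routine obstruction theory and covering-space theory. (A direct cycle is also available, e.g.\ the lifting condition implies $\eta(\afu) = 0$ since a global lift carries each $\afu\circ\gamma$ into the image of $\pi_1(\SU(r)) = 0$, but the uniform reduction above seems cleanest.)

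For the divisibility statement, assume $X$ is a closed oriented $3$-manifold and $\eta(\afu) = 0$. Then the K\"unneth components of $t_2(P_\afu)$ are $(t_2(P), 0)$, so $t_2(P_\afu) = \pi^* t_2(P)$ for the projection $\pi: S^1\times X\to X$. Naturality of the Pontryagin square gives $C t_2(P_\afu) = \pi^* C t_2(P)$, and $C t_2(P) \in H^4(X,\bb{Z}_{2r}) = 0$ since $\dim X = 3$; hence $C t_2(P_\afu) = 0$. By the relation (\ref{pontrelation}) applied over the $4$-manifold $S^1\times X$, the mod $2r$ reduction of $q_4(P_\afu)$ vanishes, i.e.\ $2r \mid q_4(P_\afu)$ in $H^4(S^1\times X,\bb{Z})\cong\bb{Z}$. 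Since $q_4(P_\afu) = 2\deg(\afu)$ (its $H^4(X)$-component is zero as $\dim X = 3$), this yields $2r \mid 2\deg(\afu)$, i.e.\ $r \mid \deg(\afu)$.
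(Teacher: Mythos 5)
Your proposal is correct and follows essentially the same route as the paper: your loop invariant $[\afu \circ \gamma]$ is precisely the paper's auxiliary class $\eta'(\afu)$ defined via $\afu_*: \pi_1(P) \to \pi_1(\PU(r))$ (your well-definedness check under change of trivialization is the paper's Claim 2), your clutching lemma identifying $\langle \eta(\afu), \gamma \rangle$ with $t_2$ of the pullback of $P_{\afu}$ to $S^1 \times S^1$ is the paper's proof that $\eta = \eta'$ via transition functions on the torus, and the covering-space argument through $P \times_{\PU(r)} \SU(r)$ for the lifting condition is the same. The only differences are cosmetic --- you handle the 1-skeleton condition with the primary difference class on $X^{(1)}$ where the paper routes the converse through the 2-skeleton using $\pi_2(\PU(r)) = 0$, and your divisibility argument spells out (naturality of the Pontryagin square, vanishing of $H^4(X, \bb{Z}_{2r})$) what the paper compresses into ``follows from (\ref{pontrelation})''.
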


		\begin{proof} 
			Set $G  = \PU(r)$. First note that the final assertion of the proposition follows from (\ref{pontrelation}). To prove the equivalence of the three conditions, we identify an equivalent characterization of the parity (this is the definition given in (\cite{DS2})). Any gauge transformation ${\afu}: P \rightarrow G$ determines a group homomorphism
			
			$${\afu}_*: \pi_1(P) \longrightarrow \pi_1(G) = \bb{Z}_r$$
			in the usual way. 
			
			\medskip
			
			\noindent \emph{Claim 2: This descends to a homomorphism $\pi_1(X) \rightarrow \bb{Z}_r$.}			
			
			\medskip
			
			By the homotopy long exact sequence, it suffices to show that if $\left[\gamma\right]$ lies in the image of $\pi_1(G) \rightarrow \pi_1(P)$ then ${\afu}_* \left[\gamma \right] = 0$. The condition on $\gamma$ implies that this class is represented by a loop of the form $t \mapsto p \cdot \mu(t)$, where $p \in P$ is fixed and $\mu:  \bb{R} / \bb{Z} \rightarrow G$ is a loop. Then the gauge invariance of ${\afu}$ implies that ${\afu}_*\left[ \gamma \right]$ is the homotopy class of the loop $t \mapsto \mu(t)^{-1} {\afu}(p) \mu(t)$. The key point is that this latter loop lifts to a loop in $\SU(r)$, and hence this loop is homotopically trivial since $\SU(r)$ is simply-connected. This proves Claim 2.
			
			\medskip

			By Claim 2, any gauge transformation induces an element $\eta'({\afu}) \in H^1(X, \bb{Z}_r)$. In a moment, we will show that this is exactly the parity $\eta({\afu})$. Putting this on hold for now, we prove the equivalence of the three conditions of the proposition. 
			
			Note that the proof of Claim 2 shows that $\eta'({\afu})$ vanishes if and only if ${\afu}_*\pi_1(P)$ is zero in $\pi_1(G)$. By general theory for the covering space $\SU(r) \rightarrow \PU(r)$, this happens if and only if ${\afu}$ lifts to a map $\widetilde{{\afu}} : P \rightarrow \SU(r)$. This proves the equivalence of the first two conditions.
			
			Now we prove the equivalence of the first and third conditions. Suppose $\eta'({\afu}) = 0$. View gauge transformations as sections of $P \times_G G \rightarrow X$. Note that $P \times_G \SU(r)$ is a cover of $P \times_G G$ (the former bundle is constructed using the representation of $\PU(r)$ on $\SU(r)$ induced by the adjoint representation of $\SU(r)$ on itself). Then the argument of the previous paragraph shows that ${\afu}$ lifts to a section $\widetilde{{\afu}}$ of $P \times_G \SU(r) \rightarrow X$. The fibers of this bundle are simply-connected, so this bundle is trivial over the 1-skeleton, $X^1$, of $X$ (in fact it is trivial over the 2-skeleton as well). This implies that ${\afu}$ can be homotoped to the identity over $X^1$. 
			
			Conversely, suppose ${\afu} \vert_{X^1}: X^1 \longrightarrow P \times_G G \vert_{X^1}$ can be homotoped to the identity. We have $\pi_2(G)$ is trivial, so ${\afu}$ can be homotoped to the identity over the 2-skeleton $X^2$ as well. Let $\iota: X^2 \hookrightarrow X$ denote the inclusion, which induces an isomorphism on $H^1$. The class $\eta'$ commutes with pullback, so we have 
			
			$$0  =  \eta'(\iota^* {\afu}) = \iota^*\eta'({\afu})$$
			which implies $\eta'({\afu}) = 0$. 
			
			\medskip
			
			Finally, we need to show that the classes $\eta({\afu}) = \eta'({\afu})$ agree. Fix a loop $\gamma: S^1 \rightarrow X$, and consider the map $\Gamma: S^1 \times S^1 \longrightarrow S^1 \times X$, given by $\Gamma(s,t)  = (s, \gamma(t))$. Then by the definition of $\eta$ and the functoriality of $t_2$, we have 
			
			$$\eta({\afu})\left[ \gamma \right] = t_2(\Gamma^*P_{\afu}) \in H^2(S^1 \times S^1 , \bb{Z}_r ) = \bb{Z}_r.$$
			As with the first Chern class on (connected) surfaces, this value can be computed by trivializing over a neighborhood of a point and its complement, and then measuring the homotopy class of the transition function (which is a map $S^1 \rightarrow G$). Since we are dealing with a torus, this can be done using a non-trivial circle, rather than a point as follows: Let $U$ be a tubular neighborhood of the circle $\left\{pt\right\} \times S^1 \subset S^1 \times S^1$, and $V$ the complement of this circle (so $U$ and $V$ are both cylinders). Then $\Gamma^*P_{\afu}$ can be trivialized over $U$ and $V$, and we can choose these trivializations so that the transition function on \emph{the second component} of the intersection $U \cap V \simeq S^1 \sqcup S^1$ is the identity. The class $\eta({\afu})\left[ \gamma \right] $ is then the homotopy class of the transition function over the first component, which is precisely the value $\eta'({\afu})\left[ \gamma\right] \in \bb{Z}_r$.

\end{proof}

			We end this section with a gauge-theoretic proof that the degree is a group homomorphism. Our proof is based on connections on the principal bundles; we refer the reader to Appendix \ref{AReviewOfConnections} for a review of connections.

		\begin{proposition}\label{homs}
		The degree is a group homomorphisms
		
		$$\deg: \pi_0 \: \G(P) \longrightarrow H^3(X, \bb{Z}).$$
		Moreover, when $X$ is a closed, connected and oriented 3-manifold, the degree satisfies

		\begin{equation}
		\deg(\afu) = \fracd{r}{4\pi^2 \kappa_r}\intd{\left[0, 1\right]} \left( \intd{X} \langle F_{\afa(s)} \wedge \partial_s \afa(s) \rangle \right),
		\label{constantthings}
		\end{equation}
		where $\afa: \left[0, 1\right] \rightarrow \A(P)$ is any path of connections from any fixed reference connection $\afa_0$ to $\afu^* \afa_0$. 
		\end{proposition}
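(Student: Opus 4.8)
The plan is to pass through Chern--Weil theory: produce a de Rham representative of $q_4$, evaluate it on a connection built from the given path $\afa$ over the mapping torus, and read off the $H^3(X)$-component; the homomorphism property then falls out of the resulting integral. \textbf{Step 1 (representative of $q_4$).} By (\ref{defofchernq}), $q_4(P) = c_2(P(\frak{g})_{\bb{C}})$ is computed from the curvature $\mathrm{ad}(F)$ of the adjoint bundle, where $F$ is the curvature of a connection on $P$. Since $\frak{g}$ is semisimple, $\mathrm{Tr}(\mathrm{ad}(\mu)) = 0$, so $c_1(P(\frak{g})_{\bb{C}}) = 0$ and the Chern--Weil expression for $c_2$ collapses to $\frac{1}{8\pi^2}\mathrm{Tr}(\mathrm{ad}(F)\wedge\mathrm{ad}(F))$. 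Feeding in the trace identity (\ref{traces}) and the normalization $\langle\mu,\nu\rangle = \kappa_r\,\mathrm{tr}(\mu\nu^*)$ converts this to
$$q_4(P) = \frac{r}{4\pi^2\kappa_r}\,\langle F\wedge F\rangle$$
as a class in de Rham cohomology. This is the engine for everything that follows.

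\textbf{Step 2 (the formula).} Fix $\afa_0$ and a path $\afa\colon[0,1]\to\A(P)$ from $\afa_0$ to $\afu^*\afa_0$. Because $\afa(1) = \afu^*\afa(0)$, the connection with vanishing $ds$-component determined by $\afa(s)$ on $[0,1]\times P$ descends to a connection $A$ on the mapping torus $P_{\afu}$, with curvature
$$F_A = F_{\afa(s)} + \partial_s\afa(s)\wedge ds.$$
Substituting into Step 1 and expanding, the pure-$X$ term $\langle F_{\afa}\wedge F_{\afa}\rangle$ carries no $ds$, the $ds\wedge ds$ term vanishes, and the two cross terms coincide, leaving $2\langle F_{\afa(s)}\wedge\partial_s\afa(s)\rangle\wedge ds$. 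Fiber-integrating over $S^1$ extracts exactly the $H^3(X)$-component of $q_4(P_{\afu})$, which by definition is $2\deg(\afu)$; the factor of $2$ from the cross terms cancels the factor of $2$ in the definition of the degree, which is precisely why the constant $r/4\pi^2\kappa_r$ survives intact. Dividing by $2$ and integrating over the closed oriented $3$-manifold $X$ yields (\ref{constantthings}). I expect this step to be the main obstacle, though the difficulty is bookkeeping rather than ideas: one must verify that the $ds$-free connection genuinely descends across the gluing $(0,\afu(p))\sim(1,p)$, and match the orientation and fiber-integration conventions of the preceding Remark so that the sign comes out as stated.

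\textbf{Step 3 (homomorphism).} First, $\deg(\afu)$ is independent of the chosen path, since it equals the connection-independent topological quantity $q_4(P_{\afu})$. Write $W(\afa) = \int_{[0,1]}\int_X\langle F_{\afa(s)}\wedge\partial_s\afa(s)\rangle$ and choose paths $\afa^{\afv}$ from $\afa_0$ to $\afv^*\afa_0$ and $\afa^{\afu}$ from $\afa_0$ to $\afu^*\afa_0$. Concatenating $\afa^{\afv}$ with the translated path $\afv^*\afa^{\afu}$ produces a path from $\afa_0$ to $(\afu\afv)^*\afa_0 = \afv^*\afu^*\afa_0$, so additivity of the integral under concatenation gives $W(\afu\afv) = W(\afa^{\afv}) + W(\afv^*\afa^{\afu})$. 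Since $F_{\afv^*a} = \mathrm{Ad}_{\afv^{-1}}F_a$ and $\partial_s(\afv^*a) = \mathrm{Ad}_{\afv^{-1}}\partial_s a$ (the inhomogeneous term $\afv^{-1}d\afv$ is $s$-independent), the pointwise $\mathrm{Ad}$-invariance of $\langle\cdot,\cdot\rangle$ forces $W(\afv^*\afa^{\afu}) = W(\afa^{\afu})$. Hence $\deg(\afu\afv) = \deg(\afu) + \deg(\afv)$.

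Finally, I would observe that the homomorphism property is only nontrivial on closed oriented $3$-manifolds: for any other $X$ of dimension at most $3$ one has $H^3(X,\bb{Z}) = 0$, so $\deg$ vanishes identically and additivity is automatic. I would also note that the $\mathrm{Ad}$-invariance computation of Step 3 is valid pointwise at the level of $3$-forms on $X$, so the same argument establishes additivity of the $H^3(X,\bb{R})$-valued degree for general $X$ without ever integrating over $X$; the integral formulation is merely the convenient packaging in the case that matters.
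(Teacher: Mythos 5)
Your proposal is correct and follows the paper's own proof essentially step for step: the Chern--Weil formula (\ref{chernweil}) for $q_4$ obtained from Example \ref{ex4} and (\ref{traces}), the path-induced connection on the mapping torus with its curvature splitting, and the homomorphism property via concatenation of paths plus additivity of the integral (your $\mathrm{Ad}$-equivariance argument for the second leg is just a more explicit version of the paper's device of taking $\afu^*\afa_0$ as the reference connection for the second path). The one slip is the curvature splitting, which should read $F_{\afA} = F_{\afa(s)} + ds\wedge\partial_s\afa(s)$ rather than $F_{\afa(s)} + \partial_s\afa(s)\wedge ds$; with the paper's orientation convention ($ds\wedge d\mathrm{vol}_X$ positive) the correct splitting is exactly what produces (\ref{constantthings}) with the stated sign --- a point you flag as a convention check but leave unresolved.
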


		\begin{proof}		
		
		First consider a closed, connected, oriented 4-manifold $Z$ equipped with a principal $\PU(r)$-bundle $Q \rightarrow Z$. It follows from the definition of $q_4$, together with the Chern-Weil formula in Example \ref{ex4} that $q_4(Q)$ can be computed using the formula

		\begin{equation}
		q_4(Q)\left[Z\right] = \frac{r}{4\pi^2 \kappa_r} \intd{Z} \langle F_{\afA} \wedge 	F_{\afA} \rangle \in \bb{Z}.
		\label{chernweil}
		\end{equation}
		Here, $\kappa_r$ is as in Section \ref{PropertiesOfPSU(r)}, ${\afA}$ is any connection on $Q \rightarrow Z$, and $F_A$ is the curvature of the connection. 
		
		Now let $P \rightarrow X$ be a $\PU(r)$-bundle over a closed, connected, oriented 3-manifold $X$, and let $\afu$ be a gauge transformation on $P$. We will apply the formula (\ref{chernweil}) to $Z = S^1 \times X$ and $Q = P_{\afu}$, the mapping torus of $\afu: P \rightarrow P$. To do this, we need a connection on $P_{\afu}$, which we can construct as follows: Let $\A(P)$ denote the (affine) space of connections on $P$. Fix any connection ${\afa}_0 \in \A(P)$, and let ${\afa}: \left[0, 1\right]  \rightarrow \A(P)$ be any path from $\afa_0$ to $\afu^* \afa_0$. This defines a connection ${\afA}$ on $\left[0, 1\right] \times P \rightarrow \left[0, 1\right] \times X$ by declaring ${\afA} \vert_{\left\{s \right\} \times X } = \afa(s)$ for $s \in \left[0, 1\right]$. Moreover, ${\afA}$ descends to a connection on $P_{\afu}$, so by (\ref{chernweil}) we have
		
		$$\begin{array}{rcl}
		\deg(\afu) &=& \fracd{r}{8\pi^2 \kappa_r}\intd{\left[0, 1 \right] \times X} \langle F_{\afA} \wedge F_{\afA} \rangle. \\
		\end{array}$$
		The curvature decomposes into components as $F_{\afA} = F_{\afa(s)}  + ds \wedge \partial_s \afa(s)$, and so this can equivalently be written
		
		$$\deg(\afu) = \fracd{r}{4\pi^2 \kappa_r}\intd{\left[0, 1\right]} \left( \intd{X} \langle F_{\afa(s)} \wedge \partial_s \afa(s) \rangle \right),$$
		which is (\ref{constantthings}). (Our orientation convention is that $ds \wedge d\mathrm{vol}_X$ is a positive volume form on $I \times X$.) 
		
		That the degree is a group homomorphism follows from (\ref{constantthings}) and the additivity of the integral. Indeed, let $u, v \in \G(Q)$. Fix a path $\afa$ from $\afa_0$ to $u^*\afa_0$, and a second path $b$ from $u^* \afa_0$ to $v^*u^* \afa_0$. These can be concatenated to form a path $c$ from $\afa_0$ to $v^*u^* \afa_0$, and this gives
		
		$$\begin{array}{rcl}
		\deg(\afu \afv) & = & \fracd{r}{4\pi^2 \kappa_r}\intd{\left[0, 2 \right]} \left( \intd{X} \langle F_{c(s)} \wedge \partial_s c(s) \rangle \right)\\
		&&\\
		& = & \fracd{r}{4\pi^2 \kappa_r}\intd{\left[0, 1 \right]} \left( \intd{X} \langle F_{\afa(s)} \wedge \partial_s \afa(s) \rangle \right)\\
		&& \indent  +\fracd{r}{4\pi^2 \kappa_r}\intd{\left[1, 2 \right]} \left( \intd{X} \langle F_{b(s)} \wedge \partial_s b(s) \rangle \right)\\
		&&\\
		& = & \deg(\afu) + \deg(\afv).
		\end{array}$$
		\end{proof}

		\subsection{Application 1: Free actions on flat connections}

	 We suppose $X$ is a connected and oriented manifold of dimension 2 or 3, equipped with a principal $\PU(r)$-bundle $P \rightarrow X$. In case $X$ is 2-dimensional, we assume that $t_2(P)\left[X\right] \in \bb{Z}_r$ is a generator. If $X$ is 3-dimensional then we assume there is an embedding $\Sigma \hookrightarrow X$ of a connected, oriented surface such that $t_2(P) \left[ \Sigma \right] \in \bb{Z}_r$ is a generator. The next lemma shows that the subgroup $\ker \eta \subset \G(P)$ acts freely on the space of flat connections.

		\begin{lemma}\label{gaugestab0}
			Suppose $P \rightarrow X$ is as above and suppose ${\afa}$ is any flat connection on $P$. Then the stabilizer of ${\afa}$ in $\ker \eta \subseteq \G(P)$ is trivial:
			
			$$\left\{\left. {\afu} \in \G(P) \: \right|\: \eta({\afu}) = 0,\indent  {\afu}^*{\afa} = {\afa} \:  \right\} = \left\{e \right\}.$$
			\end{lemma}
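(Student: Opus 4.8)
The plan is to translate the stabilizer condition into representation-theoretic data via the holonomy of the flat connection, and then exploit the hypothesis that $t_2(P)[\Sigma]$ generates $\bb{Z}_r$ to force irreducibility of the (lifted) holonomy, so that the relevant centralizer collapses onto the center. First I would recall the standard fact that a gauge transformation $\afu$ satisfies $\afu^* \afa = \afa$ if and only if $\afu$ is covariantly constant for $\afa$, i.e. $d_\afa \afu = 0$ when $\afu$ is viewed as a section of $P \times_G G$. Since $\afa$ is flat and $X$ is connected, such a section is determined by a single holonomy-invariant value: after fixing a base point and a point $p_0$ in the fibre, $\afu$ corresponds to an element $g \in G$ lying in the centralizer $Z_G(\mathrm{im}\,\rho)$ of the holonomy representation $\rho : \pi_1(X) \to G = \PU(r)$ of $\afa$. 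Thus $\afu = e$ precisely when $g = e$.

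Next I would bring in the hypothesis $\eta(\afu) = 0$. By Proposition \ref{trivgaugetrans} this means $\afu$ lifts to a $G$-equivariant map into $\SU(r)$, equivalently to a section of the covering bundle $P \times_G \SU(r) \to P \times_G G$; because this lift may be obtained by parallel transport and parallel transport is compatible with the covering projection, the lift is again covariantly constant. Hence, under the same correspondence applied to $\SU(r)$, the lift corresponds to an element $\widetilde{g} \in \SU(r)$ lying in the centralizer $Z_{\SU(r)}(\widetilde{\Gamma})$ of the subgroup $\widetilde{\Gamma} \subseteq \SU(r)$ generated by lifts of the holonomy. Since the kernel $\bb{Z}_r = Z(\SU(r))$ of $\SU(r) \to \PU(r)$ maps to the identity, proving $\afu = e$ is equivalent to showing $Z_{\SU(r)}(\widetilde{\Gamma}) = Z(\SU(r))$, i.e. that $\widetilde{\Gamma}$ acts irreducibly on $\bb{C}^r$ (Schur's lemma). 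The condition $\eta(\afu) = 0$ is exactly what cuts the relevant centralizer down from $Z_{\PU(r)}(\mathrm{im}\,\rho)$ to the genuine centralizer of the lifted group inside the cover.

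The core of the argument, and the step I expect to be the main obstacle, is deducing this irreducibility from the hypothesis on $t_2$. I would first treat $\dim X = 2$, where $X = \Sigma$ is a closed surface. Here $t_2(P)[\Sigma]$ is the standard obstruction to lifting the flat holonomy to $\SU(r)$, computed as the central element $\prod_i [\widetilde{A}_i, \widetilde{B}_i] = \zeta\,\mathrm{Id} \in Z(\SU(r))$ built from any lifts $\widetilde{A}_i, \widetilde{B}_i$ of the holonomy along a standard generating set of $\pi_1(\Sigma)$; the generator hypothesis says $\zeta$ is a primitive $r$-th root of unity. Now if $W \subset \bb{C}^r$ were a proper nonzero $\widetilde{\Gamma}$-invariant subspace of dimension $k$, then each commutator would preserve $W$ with unit determinant there, forcing $1 = \det\big( (\prod_i [\widetilde{A}_i, \widetilde{B}_i])|_W \big) = \zeta^k$; primitivity gives $r \mid k$, contradicting $0 < k < r$. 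Hence $\widetilde{\Gamma}$ is irreducible and the lemma follows in dimension $2$. The delicate point to pin down carefully is precisely the identification of $t_2[\Sigma]$ with this product-of-commutators obstruction.

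Finally, for $\dim X = 3$ I would reduce to the surface case by restriction along $\iota : \Sigma \hookrightarrow X$. Flatness, the stabilizer condition, and (by naturality of the parity, $\eta(\afu|_\Sigma) = \iota^* \eta(\afu)$) the vanishing $\eta(\afu|_\Sigma) = 0$ all restrict, while $t_2(P|_\Sigma)[\Sigma] = t_2(P)[\Sigma]$ remains a generator by functoriality; the two-dimensional case then yields $\afu|_\Sigma = e$. Since $\afu$ is covariantly constant, $X$ is connected, and $\Sigma \neq \emptyset$, agreeing with the identity over $\Sigma$ propagates by parallel transport to force $\afu = e$ everywhere, completing the proof.
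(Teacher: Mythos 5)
Your proof is correct, and its skeleton matches the paper's: use $\eta(\afu)=0$ to lift $\afu$ to an equivariant map into $\SU(r)$, observe that the stabilizer condition forces the lifted value at a point to centralize the lifted holonomy, and reduce the three-dimensional case to the distinguished surface $\Sigma$. The core step, however, is genuinely different --- and more robust. The paper, following Dostoglou--Salamon \cite{DS2}, cites the fact that when $t_2(P\vert_\Sigma)$ is a generator the holonomy subgroup coming from loops in $\Sigma$ is \emph{non-abelian}, and concludes from this that the centralizing element $\widetilde{\afu}(p_0)$ is central in $\SU(r)$. That implication (non-abelian $\Rightarrow$ centralizer equals center) is valid for $r=2$, which is the case treated in the cited source ($\SO(3)=\PU(2)$), but fails for $r\geq 3$: a block-embedded $\SU(2)\subset\SU(3)$ is non-abelian yet reducible, and its centralizer contains non-central diagonal elements. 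What is actually needed for general $r$ is \emph{irreducibility} of the lifted holonomy, and this is precisely what your commutator-determinant argument supplies: an invariant subspace $W$ of dimension $k$ forces $1=\det\bigl((\prod_i[\widetilde{A}_i,\widetilde{B}_i])\vert_W\bigr)=\zeta^k$ with $\zeta$ a primitive $r$-th root of unity, so $r\mid k$, a contradiction, and Schur's lemma then collapses the centralizer to the center. In this sense your proof is more complete than the paper's sketch for $r>2$; the one fact you defer (the identification of $t_2(P\vert_\Sigma)[\Sigma]$ with the product-of-commutators lifting obstruction) is standard and is a citation of the same weight as the paper's appeal to \cite{DS2}. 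Your globalization step also differs: the paper propagates the identity from $P\vert_\Sigma$ to all of $P$ by sweeping out $X$ with surfaces homologous to $\Sigma$, whereas you note that $\afu$ and the identity are both covariantly constant sections agreeing over $\Sigma$, hence agree everywhere on the connected manifold $X$ by parallel transport; both are valid, but yours avoids the (true but unjustified) geometric claim that every point of $X$ lies on a surface homologous to $\Sigma$.
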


	\begin{proof}
	The proof of this lemma follows just as in the proof of \cite[Lemma 2.5]{DS2}. We supply a sketch for convenience under the assumption that $X$ has dimension 3, and so there is some surface $\Sigma \subset X$ with $t_2(P \vert_{\Sigma}) \in \bb{Z}_r$ a generator. The case when $X$ is a surface is similar and a little easier. 
	
	The key point is that any ${\afu} \in \ker \eta$ lifts to an equivariant map $\widetilde{{\afu}}: P \rightarrow \SU(r)$. Fix a basepoint $p_0 \in P\vert_{\Sigma}$ and suppose ${\afu} \in \ker \eta$ fixes a flat connection ${\afa}$. We want to show that ${\afu}$ is the identity. Since ${\afu}$ fixes ${\afa}$, it follows that $\widetilde{{\afu}}(p_0)$ commutes with the $\SU(r)$-holonomy group $H_{\afa}(p_0) \subseteq \SU(r)$. In particular, $\widetilde{{\afu}}(p_0)$ commutes with the subgroup of $H_{\afa}(p_0)$ coming from the holonomy around loops lying entirely in the restriction $P\vert_{\Sigma}$. Since ${\afa}$ restricts to a flat connection on $P\vert_{\Sigma}$, and $t_2(P\vert_\Sigma) \in \bb{Z}_r$ is a generator, this subgroup is non-abelian \cite[p. 20]{DS2}. This implies that $\widetilde{{\afu}}(p_0)$ is central in $\SU(r)$, and so descends to the identity in $\PU(r)$. This argument holds for any $p_0 \in P\vert_{\Sigma}$ and so ${\afu}: P \rightarrow \PU(r)$ restricts to the identity map on $P \vert_{\Sigma} \subset P$. This argument further holds if $\Sigma$ is replaced by any closed, oriented surface $\Sigma' \subset X$ that is homologous in $X$ to $\Sigma$. Since any point in $P$ is contained in such a surface, it follows that ${\afu}$ is the identity on all of $P$, as desired. 
	\end{proof}

\subsection{Application 2: Existence of degree $d$ gauge transformations}

Fix a closed, connected, oriented 3-manifold $X$, and suppose we are handed a preferred closed, connected, oriented surface $\Sigma \subset X$. Let $P \rightarrow X$ be a principal $\PU(r)$-bundle, and assume that $t_2(P)$ is the reduction of an integral class. This implies that $P = \overline{P} \times_{\U(r)} \PU(r)$ is induced from a principal $\U(r)$-bundle $\overline{P} \rightarrow X$. Then we set

$$d \defeq c_1(\overline{P}) \left[ \Sigma\right] \in \bb{Z}.$$
(So $P$ satisfies the conditions of Application 1 when $d$ and $r$ are relatively prime.) 
	
	\begin{proposition}\label{degprop}
			Let $d \in \bb{Z}$, $P \rightarrow X$ and $\Sigma \subset X$ be as above. 

			\medskip			
			
			(a) Then there exists a gauge transformation $\afu \in \G(P)$ of degree $d$. Moreover, the parity $\eta(\afu) :H_1(X) \rightarrow \bb{Z}_r$ is given by the intersection number of a loop with $\Sigma$, reduced modulo $r$.
			
			\medskip
			
			(b) Suppose $d$ and $r$ are relatively prime. Then there exists a gauge transformation $\afu \in \G(P)$ of degree 1. Moreover, writing $md + nr =1$, the parity $\eta(\afu) :H_1(X) \rightarrow \bb{Z}_r$ is given by $m$ times the intersection number of a loop with $\Sigma$, reduced modulo $r$.
	\end{proposition}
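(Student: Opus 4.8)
The plan is to pass, via Proposition \ref{donfloer}, from the problem of \emph{constructing a gauge transformation} to the problem of \emph{constructing a $\PU(r)$-bundle} $Q \to S^1 \times X$ that restricts to $P$ on a fiber and carries the prescribed degree and parity. Since $P$ lifts to the $\U(r)$-bundle $\overline{P}$, I would build $Q$ as $Q = \overline{Q} \times_{\U(r)} \PU(r)$ from a suitable $\U(r)$-bundle $\overline{Q} \to S^1 \times X$ restricting to $\overline{P}$. Over the $4$-manifold $S^1 \times X$ (for $r \geq 2$) a $\U(r)$-bundle is classified by its pair $(c_1, c_2) \in H^2 \oplus H^4$, and any such pair is realizable; over the $3$-manifold $X$ such a bundle is determined by $c_1$ alone, so it suffices to arrange that $c_1(\overline{Q})$ restricts to $c_1(\overline{P})$ in order to guarantee $Q|_{\{pt\}\times X} \cong P$. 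The bijection of Proposition \ref{donfloer} then produces a gauge transformation $\afu$ with $P_{\afu} \cong Q$, and $\deg(\afu)$, $\eta(\afu)$ are read off from the K\"unneth components of $q_4(Q)$ and $t_2(Q)$.

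The computational heart is to express these two classes through the Chern classes of $E \defeq \overline{Q}(\bb{C}^r)$. On one hand, $t_2(Q)$ is the mod $r$ reduction of $c_1(\overline{Q})$. On the other, $q_4(Q) = c_2(Q(\frak{g})_{\bb{C}})$, and the complexified adjoint bundle is the trace-free summand of $\End(E) = E \otimes E^*$; since the complementary trivial line summand carries no Chern classes, $q_4(Q) = c_2(E \otimes E^*)$, and a short Chern-root computation gives $q_4(Q) = 2r\,c_2(E) - (r-1)\,c_1(E)^2$. Writing the K\"unneth components of $c_1(\overline{Q})$ as $(c_1(\overline{P}), \beta)$ with $\beta \in H^1(X)$, and those of $c_2(\overline{Q})$ as $(0, \gamma)$ with $\gamma \in H^3(X)$ (using $H^4(X) = 0$), the $ds$-component of $c_1(\overline{Q})^2$ is $2\,c_1(\overline{P}) \smallsmile \beta$, so the image of $q_4(Q)$ in $H^3(X)$ equals $2\bigl[r\gamma - (r-1)(c_1(\overline{P}) \smallsmile \beta)\bigr]$. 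Comparing with the definition of degree yields $\deg(\afu) = r\gamma - (r-1)(c_1(\overline{P}) \smallsmile \beta)$, while $\eta(\afu) = \beta \bmod r$; the explicit factor of $2$ also re-proves Claim 1 in this setting.

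With these two formulas in hand the argument reduces to choosing $\beta$ and $\gamma$. For (a) I take $\beta = \sigma$, the Poincar\'e dual of $[\Sigma]$, so that $(c_1(\overline{P}) \smallsmile \beta)[X] = c_1(\overline{P})[\Sigma] = d$, and I pick $\gamma$ with $\gamma[X] = d$; this gives $\deg(\afu) = rd - (r-1)d = d$, and $\eta(\afu) = \sigma \bmod r$ pairs with a loop $\ell$ to give the intersection number $\Sigma \cdot \ell$ reduced mod $r$. For (b) I instead take $\beta = m\sigma$, so $(c_1(\overline{P}) \smallsmile \beta)[X] = md$ and $\eta(\afu)$ becomes $m$ times the intersection number mod $r$; I then need an integer $\gamma[X]$ with $r\,\gamma[X] = 1 + (r-1)md$, and the relation $md + nr = 1$ gives $md \equiv 1 \pmod r$, whence $1 + (r-1)md \equiv 0 \pmod r$, so such a $\gamma[X]$ exists and $\deg(\afu) = 1$.

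The one genuinely delicate point is this final integrality check: the degree formula forces $r \mid 1 + (r-1)md$, and it is precisely the hypothesis $\gcd(d,r) = 1$ that makes this congruence solvable. The remaining ingredients --- realizability of $\U(r)$-bundles with prescribed Chern data over $S^1 \times X$, the identity $q_4 = 2r\,c_2 - (r-1)c_1^2$, and the Poincar\'e-duality reading of $\beta \bmod r$ as an intersection pairing --- are routine. As an alternative to the direct construction in (b), one could instead combine the degree-$d$ transformation of (a) with a parity-zero transformation of degree $r$, using that the degree is a homomorphism (Proposition \ref{homs}).
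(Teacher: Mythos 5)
Your proposal is correct, and it differs from the paper's proof at the key realization step. Both arguments share the same skeleton: Proposition \ref{donfloer} reduces everything to producing a bundle over $S^1 \times X$ that restricts to $P$ on a fiber and has prescribed K\"unneth components of $t_2$ and $q_4$. The paper produces this bundle by prescribing the pair $(t_2,q_4)$ directly and invoking Woodward's realizability theorem for $\PU(r)$-bundles over $4$-complexes, which requires first verifying the Pontryagin-square compatibility (\ref{pontrelation}) by means of (\ref{computeC}). You instead exploit the standing hypothesis that $P$ lifts to $\U(r)$: you realize the bundle as $\overline{Q} \times_{\U(r)} \PU(r)$ for a $\U(r)$-bundle $\overline{Q}$ with prescribed $(c_1,c_2)$ --- an elementary realizability statement --- and then read off $t_2 \equiv c_1 \bmod r$ and $q_4 = 2r\,c_2(E) - (r-1)\,c_1(E)^2$ with $E = \overline{Q}(\bb{C}^r)$. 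Your Chern-root identity is correct, and it is consistent with (\ref{pontrelation}) since $-(r-1) \equiv r+1 \bmod 2r$; your two numerical choices do give $\deg(\afu) = d$ in (a) and $\deg(\afu)=1$ in (b), with the integrality of $\gamma[X] = \bigl(1+(r-1)md\bigr)/r$ following from $md \equiv 1 \bmod r$ exactly as you say. What the trade buys: you replace the deeper input (Woodward's realizability of pairs subject to (\ref{pontrelation})) with standard $\U(r)$-bundle theory, and the compatibility relation holds automatically rather than needing verification; the cost is that your construction only yields bundles induced from $\U(r)$, which suffices here precisely because $t_2(P)$ is assumed to be the reduction of an integral class, so no generality is lost in this setting. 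The two facts you leave as routine --- existence of $\U(r)$-bundles over a $4$-manifold with arbitrary $(c_1,c_2)$, and $t_2\bigl(\overline{Q} \times_{\U(r)} \PU(r)\bigr) \equiv c_1(\overline{Q}) \bmod r$ --- are indeed standard, and the paper itself relies on the latter implicitly when asserting that its bundle restricts to $P$ on fibers. A minor bonus of your bookkeeping: in (b) you scale only the $H^1(X)$-component (taking $\beta = m\sigma$), so the fiber restriction of the characteristic data is visibly unchanged, whereas the paper's shorthand $t_2(P_{\afu}) = m \cdot t$, read literally, would also scale the fiber component.
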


In the case $r = 2$, Dostoglou and Salamon \cite[Lemma 2.3, Lemma A.2]{DS2} prove this statement by explicitly constructing the desired gauge transformation. We present a proof based upon the characteristic classes $t_2, q_4$.
		
		\begin{proof}[Proof of Proposition \ref{degprop}]
			Consider the cohomology class $t \in H^2(S^1 \times X, \bb{Z}_r)$ given by the mod $r$ reduction of 
			
			$$c_1(\overline{P}) +  ds \smallsmile \textrm{PD}_X \left[\Sigma \right] \in H^2(S^1 \times X)$$
			where $ds$ is the generator of $H^1(S^1)$ and $\textrm{PD}_X:H_2(X) \rightarrow H^1(X)$ denotes the Poincar\'{e} duality operator on $X$. Let $C$ be the Pontryagin square appearing in (\ref{pontrelation}). Then since $t$ is the reduction of an integral class, we can compute as follows
			
			$$\begin{array}{rcl}
			Ct & \equiv_{2r} & \left\{ \begin{array}{ll}
								2  c_1(\overline{P}) \smallsmile ds \smallsmile \textrm{PD}_X\left[\Sigma \right] &  \textrm{if $r$ is even}\\
								4  c_1(\overline{P}) \smallsmile ds \smallsmile \textrm{PD}_X\left[\Sigma \right] &  \textrm{if $r$ is odd}
								\end{array}\right.\\
								&&\\
								& \equiv_{2r} & \left\{ \begin{array}{ll}
										2d \cdot  e &  \textrm{if $r$ is even}\\
										4d \cdot  e & \textrm{if $r$ is odd}
										\end{array}\right.
								\end{array}$$
		where $e \in H^4(S^1 \times X)$ is the positive generator. This gives
		
		$$\left. \begin{array}{cr}
											\textrm{$r$ even} &		(r+1)Ct\\
											\textrm{$r$ odd} & 		\fracd{r+1}{2} C t
													\end{array}\right\}								\;		 \equiv_{2r}  \;  2d(r+1) \cdot e  \;\equiv_{2r} \;  2d \cdot e.$$
			This is exactly the relation (\ref{pontrelation}) with $t$ replacing $t_2$ and $2d \cdot e$ replacing $q_4$. Since we are on a 4-manifold, it follows from L.M. Woodward's classification that there is a principal $\PU(r)$-bundle $Q \rightarrow S^1 \times X$ with
			
			$$t_2(Q)  = t, \indent q_4(Q) = 2d \cdot e.$$
			This bundle $Q$ restricts to the bundle $P$ on each fiber of $\left\{pt \right\} \times X$. In particular, by Proposition \ref{donfloer}, this bundle is of the form $Q = P_{\afu}$ for some gauge transformation ${\afu} \in \G(P)$. It follows from our definition of the degree that ${\afu}$ is a gauge transformation of degree $d$ with $\eta(\afu)$ given by the mod $r$ reduction of $\mathrm{PD}_X\left[ \Sigma \right]$. This proves the first statement.
			
			Now suppose $d$ and $r$ are relatively prime. Then there is some integer $m$ with
			
			$$2md \equiv_{2r} 2.$$
			The same type of argument given above shows that there is a gauge transformation ${\afu} \in \G(P)$ with 
			
			$$t_2(P_{\afu}) = m \cdot t , \indent q_4(P_{\afu}) = 2.$$
			This shows $\deg(\afu) = 1$ and $\eta(\afu)$ is the mod $r$ reduction of $m\cdot \mathrm{PD}_X\left[ \Sigma \right]$.
			\end{proof}

		\subsection{Application 3: Circle fibrations and the group $\G_{\Sigma}$}
		
		Suppose $X$ is closed, connected, oriented 3-manifold equipped with a smooth function $f: X \rightarrow S^1$. Assume $f$ is not homotopically trivial, and that each fiber is connected. 
		
		\begin{example}\label{ex5}
				Take $X = S^1 \times \Sigma$ for a closed, connected, oriented surface $\Sigma$, and let $f$ be the projection to the first factor.  
		\end{example} 

		Fix a map $\gamma: S^1 \rightarrow X$ that is a section of $f$ 
		
		$$\mathrm{Id}_{S^1} = f \circ \gamma.$$ 
		Then for each $r \geq 2$ and $d \in \bb{Z}_r$, there is a unique principal $\PU(r)$-bundle $P \rightarrow X$ such that $t_2(P) \in H^2(X, \bb{Z}_r)$ is Poincar\'{e} dual to the class $d \left[ \gamma \right] \in H^1(X, \bb{Z}_r)$. Since $\left[ \gamma \right]$ is the reduction of an integral class, it follows that $t_2(P)$ is as well. Moreover, if we set
		
		$$\Sigma \defeq f^{-1}(pt),$$
		where $pt \in S^1$ is a regular value, then we have
		
		$$t_2(P) \left[ \Sigma \right] = d.$$
		This follows because the intersection number of $\left[ \Sigma \right]$ with $\left[ \gamma \right]$ is 1.

		Consider the subgroup
		
		$$\G_{\Sigma} \defeq \left\{ \afu \in \G(P) \: \vert \: \eta_{\Sigma} \left( \afu\vert_{\Sigma}  \right)= 0 \right\},$$
		where $\eta_\Sigma$ is the parity operator for $P \vert_{\Sigma} \rightarrow \Sigma$. Then there is a sequence of inclusions
		
		$$\G_0(P) \subset \ker \eta \subset \G_{\Sigma} \subset \G(P).$$
		The first and last inclusions are always strict. It follows from Proposition \ref{degprop} (a) that the middle inclusion is strict when $d \in \bb{Z}_r$ is not zero.

		\begin{proposition}
			Suppose $d \in \bb{Z}_r$ is a generator. Then there is a canonical isomorphism $\G_\Sigma / \G_0(P) \cong \bb{Z}$. Moreover, the generator of $\G_\Sigma / \G_0(P) $ is the homotopy class of the degree 1 gauge transformation from Proposition \ref{degprop} (b).
		\end{proposition}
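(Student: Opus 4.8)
The plan is to show that the degree restricts to a canonical isomorphism $\deg\colon \G_\Sigma/\G_0(P)\to\bb{Z}$, with $[u_0]\mapsto 1$ for the degree-$1$ transformation $u_0$ of Proposition \ref{degprop}(b). Throughout I write $\iota\colon\Sigma\hookrightarrow X$ for the inclusion and $\theta\in H^1(S^1,\bb{Z})$ for a generator, and I set $\tau\defeq f^*\theta \bmod r\in H^1(X,\bb{Z}_r)$. Since $\gamma$ is a section of $f$ we have $\langle\tau,[\gamma]\rangle=1$, so $\tau$ has order exactly $r$; and since $f\circ\iota$ is constant, $\iota^*\tau=0$. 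By naturality of the parity, $\eta_\Sigma(u\vert_\Sigma)=\iota^*\eta(u)$, so $\G_\Sigma=\eta^{-1}(\ker\iota^*)$ and in particular $\G_0(P)\subseteq\G_\Sigma$. Because $(\eta,\deg)$ is injective on $\pi_0\G(P)$ (Section \ref{ComponentsOfTheGaugeGroup}), it suffices to compute the image of $\G_\Sigma$ under $(\eta,\deg)$ inside $\ker\iota^*\times\bb{Z}$.

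First I would establish the cohomological lemma that $\ker\iota^*=\langle\tau\rangle\cong\bb{Z}_r$. The inclusion $\langle\tau\rangle\subseteq\ker\iota^*$ is immediate from $\iota^*\tau=0$. For the reverse, given $\beta$ with $\iota^*\beta=0$ I would subtract the appropriate multiple of $\tau$ to arrange $\langle\beta,[\gamma]\rangle=0$ as well, and then show that such a $\beta$ vanishes. By the universal coefficient theorem $H^1(X,\bb{Z}_r)\cong\Hom(H_1(X;\bb{Z}),\bb{Z}_r)$, so it is enough to know that $[\gamma]$ together with the image of $H_1(\Sigma;\bb{Z})\to H_1(X;\bb{Z})$ generate $H_1(X;\bb{Z})$; equivalently, that this image equals $\ker(f_*\colon H_1(X)\to H_1(S^1))$, the complement of $\bb{Z}[\gamma]$. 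This is the key point, and is where I expect the main difficulty to lie: it is the Wang-sequence statement for a genuine fibration, but $f$ is only assumed to have connected fibers, so I would prove it by a Mayer--Vietoris argument for the decomposition of $X$ into a tubular neighborhood of $\Sigma$ and the complementary cut-open cobordism, using connectedness of the fibers to control the relevant $H_0$ and $H_1$ terms.

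Granting the lemma, for any $u\in\G_\Sigma$ I may write $\eta(u)=c\,\tau$ with $c\in\bb{Z}_r$, and since $\tau$ is the reduction of the integral class $f^*\theta$, the class $t_2(P_u)$ is then the reduction of $\widetilde t\defeq c_1(\overline P)+c\,(f^*\theta)\smallsmile ds$ (for any integer lift of $c$). I would feed $\widetilde t$ into (\ref{computeC}) and (\ref{pontrelation}): expanding $\widetilde t\smallsmile\widetilde t$, the terms $c_1(\overline P)\smallsmile c_1(\overline P)$ and $(ds)^2$ drop out for dimension reasons, so the only surviving contribution to the $H^3(X)\smallsmile ds$ summand is the cross term $2c\,c_1(\overline P)\smallsmile f^*\theta\smallsmile ds$, and because $f^*\theta=\mathrm{PD}_X[\Sigma]$ this evaluates to $2c\widetilde d$, where $\widetilde d\defeq c_1(\overline P)[\Sigma]\in\bb{Z}$ reduces to $d$. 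Tracking the constants in (\ref{pontrelation}) in both parity cases for $r$, the $H^3$-component of $q_4(P_u)$, which is $2\deg(u)$, yields the relation $\deg(u)\equiv c\,\widetilde d \pmod r$. Since $d\in\bb{Z}_r$ is a generator, $\widetilde d$ is coprime to $r$, so $c$ is recovered from $\deg(u)$ modulo $r$.

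Finally I would assemble these facts. The relation shows $(\eta,\deg)(\G_\Sigma)\subseteq\{(c\tau,k): c\equiv k\widetilde d^{-1}\!\!\pmod r\}$, and the powers of $u_0$ (which satisfies $\eta(u_0)=\widetilde d^{-1}\tau$, $\deg(u_0)=1$, and $u_0\in\G_\Sigma$ since $\iota^*\eta(u_0)=\widetilde d^{-1}\iota^*\tau=0$) realize every element of this set. Hence $(\eta,\deg)(\G_\Sigma)=(\eta,\deg)(\langle u_0\rangle)$, and injectivity of $(\eta,\deg)$ forces $\G_\Sigma/\G_0(P)=\langle[u_0]\rangle$. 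As $\deg(u_0^k)=k$, the degree gives a canonical isomorphism $\G_\Sigma/\G_0(P)\cong\bb{Z}$ sending $[u_0]$ to $1$, as claimed. The main obstacle is the cohomological lemma of the second paragraph; everything after it is bookkeeping with the formulas already recorded in Section \ref{ClassificationOfPSU(r)Bundles}.
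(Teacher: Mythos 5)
Your proof has the same architecture as the paper's own: the paper likewise reduces the proposition to showing that $(\eta,\deg)(u)$ lies in the cyclic subgroup generated by $(\eta,\deg)(u_0)$ for every $u\in\G_\Sigma$, derives the same mod-$r$ congruence between $\deg(u)$ and $\eta(u)[\gamma]$ from (\ref{pontrelation}) and (\ref{computeC}), and finishes with injectivity of $(\eta,\deg)$ on $\pi_0\,\G(P)$. All of that bookkeeping in your third and fourth paragraphs is correct. Moreover, the paper's proof rests on exactly the assertion you isolated --- that $H_1(X)$ is generated by $[\gamma]$ together with the image of $H_1(\Sigma)$ --- and the paper simply asserts it without proof. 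So you located the crux precisely.

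The gap is that this crux cannot be established by the Mayer--Vietoris argument you propose, because under the stated hypotheses (a smooth, non-nullhomotopic $f:X\to S^1$ with connected fibers and a section) the lemma is false --- and in fact so is the proposition. Connectedness of the fibers controls the $H_0$ terms in Mayer--Vietoris, but it gives no control over $H_1$ of the cut-open cobordism $W$; the sequence only shows $H_1(X)$ is generated by $[\gamma]$ and the image of $H_1(W)$, which may strictly contain the image of $H_1(\Sigma)$. Concretely, let $W$ be $\Sigma\times[0,1]$ with one 3-dimensional 1-handle attached to $\Sigma\times\{1\}$ and then a 2-handle attached along the \emph{belt circle} of that 1-handle, so that $W\cong(\Sigma\times[0,1])\,\#\,(S^1\times S^2)$; the level sets of the associated Morse function are $\Sigma$, then $\Sigma\# T^2$, then $\Sigma$, and every level set (including the two singular ones) is connected. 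Gluing the two ends gives $X\cong(\Sigma\times S^1)\#(S^1\times S^2)$ with a map $f$ satisfying every hypothesis of this section, yet $H_1(X)$ has an extra free generator $[c]$ (the core circle of the $S^1\times S^2$ summand) lying in neither $\mathrm{im}\,H_1(\Sigma)$ nor $\bb{Z}[\gamma]$. The dual class $\beta=[c]^*\in H^1(X,\bb{Z}_r)$ then lies in $\ker\iota^*$ but not in $\langle\tau\rangle$. Worse, since $\widetilde{t_2}\smallsmile\widetilde\beta$ evaluates to $d\cdot\widetilde\beta([\gamma])=0$ on $[X]$, the pair $(q,t)=\bigl(0,\,t_2(P)+ds\smallsmile\beta\bigr)$ satisfies (\ref{pontrelation}), so Woodward's classification together with Proposition \ref{donfloer} --- the very existence argument used to prove Proposition \ref{degprop} --- produces $u\in\G(P)$ with $\eta(u)=\beta$ and $\deg(u)=0$. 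This $u$ lies in $\G_\Sigma$ but shares its invariants with no power of $u_0$, and one checks $\G_\Sigma/\G_0(P)\cong\bb{Z}\oplus\bb{Z}_r$. So no refinement of the Mayer--Vietoris argument can close the gap: the lemma, the proposition, and the paper's own proof all require $f$ to be an honest fiber bundle (as in Example \ref{ex5}), in which case the Wang-sequence observation you made does supply the missing generation statement and the rest of your argument goes through.
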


		\begin{proof}
		Let $\afu_1 \in \G(P)$ be a degree 1 gauge transformation as in Proposition \ref{degprop} (b). Then the parity $\eta(\afu_1)$ measures the intersection number of a loop with $\Sigma$. This implies $\afu_1 \in \G_{\Sigma}$, since any loop in $\Sigma$ can be displaced (in $X$) from $\Sigma$. Since the degree and parity of $\afu_1$ are specified, it follows that the homotopy class $\left[ \afu_1 \right] \in \G(P) / \G_0(P)$ of $\afu_1$ is uniquely determined. To prove the proposition, it suffices to show that every gauge transformation $\afu \in \G_\Sigma$ is a power of $\afu_1$, up to homotopy. That is, we need to show
		
		$$\eta(\afu) = k \cdot \eta(\afu_0), \indent \deg(\afu) = k \cdot \deg(\afu_0)$$
		for some $k \in \bb{Z}$. 
		
		Since $\afu_0$ has degree 1, we obviously have
		
		\begin{equation}\label{specialk}
		\deg(\afu) = k\cdot  \deg(\afu_0)
		\end{equation}
		for some $k \in \bb{Z}$. As for the parity, note that Proposition \ref{degprop} (b) implies
		
			$$\eta(\afu_0) \left[ \gamma \right] = dm \indent \mathrm{mod}\; r,$$		
		where $m \in \bb{Z}$ is chosen so that $md + nr = 1$ for some $n  \in \bb{Z}$. (Here and below we freely identify elements of $\bb{Z}_r$ with any lift in $\bb{Z}$; our statements are independent of the choice of lift.) Note that the product $md$ is relatively prime to $r$ and so descends to a generator of $\bb{Z}_r$. In particular, there is some $l \in \bb{Z}$ such that
		
		$$\eta(\afu) \left[ \gamma \right] = l md \cdot \eta(\afu_0) \left[ \gamma \right].$$
		The first homology $H_1(X)$ is generated by $H_1(\Sigma)$ and $\left[ \gamma \right]$, subject to certain relations. By definition, the parity of each element of $\G_\Sigma$ vanishes on $H_1(\Sigma)$ and so we have 
		
		\begin{equation}\label{speciallmd}
		\eta(\afu)  = l md\cdot \eta(\afu_0) \in H^1(X, \bb{Z}_r).
		\end{equation}
		We will be done if we can show that $lmd$ is the mod-$r$ reduction of $k$ from (\ref{specialk}). This follows from (\ref{pontrelation}): Since $t_2(P)$ is the reduction of an integral class, we can use (\ref{computeC}) to compute (\ref{pontrelation}). This combines with the definition of the degree and parity in terms of $t_2$ and $q_4$ to give
		
		\begin{equation}\label{this}
		ds \smallsmile \deg(\afv) =  t_2(P) \smallsmile ds \smallsmile \eta(\afv) \indent \mathrm{mod}\: r
		\end{equation}
		for any gauge transformation $\afv \in \G(P)$. This is an equation in $H^4(S^1 \times X,\bb{Z}_r)$. Apply (\ref{this}) with $\afv = \afu$, and then use (\ref{specialk}) and (\ref{speciallmd}) to get
		
		$$ k \cdot  ds \smallsmile \deg(\afu_0) = lmd \cdot  t_2(P) \smallsmile ds \smallsmile \eta(\afu_0) \indent \mathrm{mod}\: r.$$
		Since $\afu_0$ has degree 1, this gives $k = lmd\; \mathrm{mod}\: r$, as desired.

		\end{proof}

\appendix

\section{A review of connections}\label{AReviewOfConnections}

	This appendix gives a fast review of connections. Since connections are only used in the proof of Proposition \ref{homs}, we only supply the details that we will need for the proof. For a more comprehensive treatment, we refer the reader to $\cite{KN}$, or to \cite[Section 4]{DunIntro} for notation similar to the notation used here. 
	
	Let $V \rightarrow X$ be a vector bundle. We assume for concreteness that $V$ is a real vector bundle, but the definitions we give below carry over to complex vector bundles with only minor modifications. Given $k \in \bb{N}$, the vector bundle $V$ can be tensored with $\Lambda^k T^*X$ to form a new vector bundle
		
		$$\left(\Lambda^k T^*X\right) \otimes V \longrightarrow X.$$
We denote sections of this bundle by $\Omega^k(X, V)$, and these section should be viewed as `$k$-forms on $X$ with values in $V$'. 

A {\bfseries connection} on $V$ is a linear map $d_{\afA}: \Omega^0(X, V) \longrightarrow \Omega^1(X, V)$ that satisfies the following Leibniz rule
	
	$$d_{\afA} (f \mu ) = df \otimes \mu + f d_{\afA} \mu$$
	for $f \in C^\infty(X)$ and $\mu \in \Omega^0(X, P(\frak{g}))$. For each $k \in \bb{N}$, any connection $d_{\afA}$ has a canonical extension to a linear map 
		
		$$d_{\afA}: \Omega^k(X, V) \longrightarrow \Omega^{k+1}(X, V)$$
		satisfying the obvious Leibniz rule. The composition $d_{\afA} \circ d_{\afA}$ is linear over $C^\infty(M)$ and so acts via a degree 2 algebraic operator ${\cal{F}}_{\afA, V} $ called the {\bfseries curvature}. That is, ${\cal{F}}_{\afA, V} \in \Omega^2(X, \mathrm{End}(V))$ and 
		
		$$d_{\afA } \circ d_{\afA} \mu = {\cal{F}}_{\afA, V} (\mu) $$
		for $\mu \in \Omega^0(X, V)$. Here $\mathrm{End}(V) = V^* \otimes V \longrightarrow X$ is the bundle of endomorphisms of $V$. It follows immediately that any connection $\afA$ satisfies the {\bfseries Bianchi identity}
		
		$$\left[ d_{\afA}, {\cal{F}}_{\afA, V} \right] \defeq d_{\afA} \circ {\cal{F}}_{\afA, V}  - {\cal{F}}_{\afA, V} \circ d_{\afA}  = 0.$$

		We denote the space of connections on $V$ by $\A(V)$. It follows that $\A(V)$ is an affine space modeled on the vector space $\Omega^1(X, V)$. As a consequence, the set $\A(V)$ is contractible and the tangent space at $\afA \in \A(V)$ can be canonically identified with $\Omega^1(X, V)$.

		\medskip
		
		Now let $G$ be a Lie group and $\frak{g}$ its Lie algebra. Assume that $\frak{g}$ is semi-simple. Fix a principal $G$-bundle $P \rightarrow X$. Consider the adjoint representation $\mathrm{Ad}: G \rightarrow \GL(\frak{g})$ and use this to form the associated vector bundle $P(\frak{g})\rightarrow X$ as in Example \ref{ex0}. We will say that a {\bfseries connection} on $P$ is a connection on the vector bundle $P(\frak{g}) \rightarrow X$. We denote the space of connection on $P$ by $\A(P)$; that is, $\A(P) = \A(P(\frak{g}))$. 
		
		\begin{remark}
		The definition of a connection on a principal bundle that we give here is not the typical definition given in the literature (i.e., as $\frak{g}$-valued 1-form on $P$ satisfying certain properties). However, since we have assumed the Lie algebra is semi-simple, the adjoint representation is faithful and so there is no information loss in dealing with the bundle $P(\frak{g})$. That is, a connection on $P$ as we have defined it here induces a unique connection in the standard sense, and vice-versa.
		\end{remark}

		It turns out that the curvature ${\cal{F}}_{\afA, P(\frak{g})}$ satisfies

		\begin{equation}\label{un}
		{\cal{F}}_{\afA, P(\frak{g})}\left(\mu \right) = \mathrm{ad}(F_{\afA}) \mu = \left[ F_{\afA}, \mu \right] \indent \indent \forall \mu \in \Omega^0(X, P(\frak{g})),
		\end{equation}
		for some $F_{\afA} \in \Omega^2(X, P(\frak{g}))$. The form $F_{\afA} $ will be called the {\bfseries curvature form} of $\afA$. Since $\frak{g}$ is semi-simple,  $F_{\afA}$ is the unique 2-form for which (\ref{un}) holds. In terms of the curvature form, the Bianchi identity takes the form $d_{\afA} F_{\afA} = 0$, where the concatenation is the given action of $d_{\afA}$ on the 2-form $F_{\afA}$.

		The gauge group $\G(P)$ acts naturally on $\A(P)$. We denote the action of $\afu\in \G(P)$ on $\afA \in \A(P)$ by $\afu^* \afA$, where $\afu^*\afA$ is the connection defined by the formula
		
		$$d_{\afu^*\afA} \mu \defeq \afu^{-1} d_{\afA} \left(\afu\mu \right),$$
		for $\mu \in \Omega^0(X, P(\frak{g}))$ (on the right we are viewing gauge transformations as sections of $P \times_G G \rightarrow X$, and we have chosen a faithful matrix representation of $G$ so that it makes sense to multiply elements of $G$ with elements of $\frak{g}$). A computation shows that the curvature form is equivariant $F_{\afu^*\afA} = \mathrm{Ad}(\afu^{-1}) F_{\afA}$.

Now suppose $\frak{g}$ is equipped with an $\mathrm{Ad}$-invariant inner product $\langle \cdot, \cdot \rangle$. Then this inner product determines a metric on the bundle $P(\frak{g})$, which we denote by the same symbol. This metric combines with the wedge to form a bilinear map 
		
		$$\Omega^j(X, P(\frak{g})) \otimes \Omega^{k}(X, P(\frak{g})) \longrightarrow \Omega^{j+k}(X), \indent \mu\otimes \nu  \longmapsto  \langle \mu \wedge \nu \rangle$$
		with values in the (usual) space of $\bb{R}$-valued forms on $X$.

		\begin{example}\label{ex3}
		Suppose we are given a path of connections 
		
		$$\bb{R}  \longrightarrow  \A(P), \indent s \longmapsto  \afA(s).$$ 
		Then for each $s \in \bb{R}$, the derivative $\partial_s \afA(s) \in \Omega^1(X, P(\frak{g}))$ is a $P(\frak{g})$-valued 1-form, and so 
		
		$$s \longmapsto \langle F_{\afA(s)} \wedge \partial_s \afA(s) \rangle \in \Omega^3(X)$$
		is a path of 3-forms on $X$.
		\end{example}

		\begin{example}\label{ex4}
			Suppose $X$ is a closed, connected, oriented $4$-manifold and fix a principal $\PU(r)$-bundle $P \rightarrow X$. Then we can form the complex vector bundle $V \defeq P(\frak{pu}(r)) \otimes \bb{C} \rightarrow X$. Let $\afA$ be a connection on $P(\frak{pu}(r))$. This has a canonical extension to the complexification $V$. 
			
			Let $ch(V)$ denote the Chern character, and $\left[ch(V)\right]$ its cohomology class. Since $X$ has dimension 4, we have

		$$\left[ch(V)\right] = \mathrm{rank}(V) + c_1(V) + \left(\fracd{1}{2}c_1(V)^2 - c_2(V)\right).$$		
		where $c_1(V)$ and $c_2(V)$ are the first and second Chern classes of $V$. These can be computed using the {\bfseries Chern character formula}
		
		$$ch(V) = \mathrm{Tr}\left( \exp\left(\fracd{i {\cal{F}}_{\afA}}{2 \pi} \right) \right).$$ 
		The right-hand side is a formal power series in the algebra of differential forms on $X$, and the trace is the usual one on $\frak{pu}(r) = \frak{su}(r)$. Since $X$ has dimension four, this power series truncates after degree four, and so is a well-defined element of $\Omega^\bullet(X)$. 
			
			 Since $V$ is the complexification of a real vector bundle, it follows that $c_1(V) = 0$. To see this, note that, in general, we have $c_1(\overline{V}) = - c_1(V)$, where $\overline{V}$ is the conjugate bundle to $V$. That $V$ is a complexification gives $\overline{V} \cong V$ and so $c_1(V) = 0$. In particular, all of the interesting information of the Chern character $\left[ch(V)\right]$ is contained entirely in the degree 4 term. Using the Chern character formula, we arrive at the following {\bfseries Chern-Weil formula}:

		\begin{equation}\label{cw2}
		c_2(V)\left[X\right]  = \fracd{1}{8\pi^2} \intdd{X} \: \mathrm{Tr}\left( {\cal{F}}_{\afA} \wedge {\cal{F}}_{\afA} \right) = \fracd{r}{4\pi^2} \intdd{X} \: \mathrm{tr}\left( {F}_{\afA} \wedge {{F}}_{\afA} \right) ,
		\end{equation}
where we have used (\ref{traces}). Note that this is always an integer. Using the Bianchi identity, one can check directly that this is independent of the choice of connection $\afA$.

		\end{example}

\end{document}